\newtheorem{thm}{Theorem}
\newtheorem{lem}[thm]{Lemma}
\theoremstyle{definition}
\newtheorem{defn}[thm]{Definition}
\newtheorem{rmk}[thm]{Remark}
\newtheorem{exmp}[thm]{Example}
\newcommand{\CPb}{\overline{\mathbb{CP}}{}^{2}}
\newcommand{\CP}{{\mathbb{CP}}{}^{2}}
\newcommand{\CPS}{{\mathbb{CP}}{}^{1}}
\title[Symplectic Surgeries and new Lefschetz fibrations]
{Symplectic Surgeries along Certain Singularities and new Lefschetz fibrations} 
\begin{document}

\author{Anar Akhmedov}
\address{School of Mathematics, 
University of Minnesota, 
Minneapolis, MN, 55455, USA}
\email{akhmedov@math.umn.edu}

\author{Ludmil Katzarkov}
\address{Fakult\"{a}t f\"{u}r Mathematik, Universit\"{a}t Wien, Oskar-Morgenstern-Platz 1, 1090 Wien, Osterreich, and University of Miami, USA, and National Research University Higher School of Economics, Russian Federation, }
\email{lkatzarkov@gmail.com}


\date{September 05, 2018}

\subjclass[2000]{Primary 57R55; Secondary 57R17}

\keywords{4-manifold, symplectic surgery, mapping class group, trident relation, Lefschetz fibration}

\begin{abstract}We define a new 4-dimensional symplectic cut and paste operations arising from the \emph{generalized star relations} $(t_{a_0}t_{a_1}t_{a_2} \cdots t_{a_{2g+1}})^{2g+1} \\ = t_{b_1} t_{b_2}^{g}t_{b_3}$, also known as the \emph{trident relations}, in the mapping class group $\Gamma_{g,3}$ of an orientable surface of genus $g\geq1$ with $3$ boundary components. We also construct new families of Lefschetz fibrations by applying the (generalized) star relations and the \emph{chain relations} to the families of words $(t_{c_1}t_{c_2} \cdots t_{c_{2g-1}}t_{c_{2g}}t_{{c_{2g+1}}}^2t_{c_{2g}}t_{c_{2g-1}} \cdots t_{c_2}t_{c_1})^{2n} = 1$, $(t_{c_1}t_{c_2} \cdots t_{c_{2g}}t_{c_{2g+1}})^{(2g+2)n} = 1$ and $(t_{c_1}t_{c_2} \cdots t_{c_{2g-1}}t_{c_{2g}})^{2(2g+1)n} = 1$ in the mapping class group $\Gamma_{g}$ of the closed orientable surface of genus $g \geq 1$ and $n \geq 1$. Furthemore, we show that the total spaces of some of these Lefschetz fibraions are irreducible exotic symplectic $4$-manifolds. Using the degenerate cases of the generalized star relations, we also realize all elliptic Lefschetz fibrations and genus two Lefschetz fibrations over $\mathbb{S}^{2}$ with non-separating vanishing cycles. \end{abstract}

\maketitle

\section{Introduction} Since the seminal works of \cite{D1, GS} Donaldson and Gompf, establishing a correspondence between Lefschetz pencils and symplectic $4$-manifolds, Lefschetz fibrations have been studied extensively in $4$-dimensional symplectic topology. Given a Lefschetz fibration over $\mathbb{S}^2$, one can associate to it a word in the mapping class group of a fiber composed of right-handed Dehn twists. Conversely, given such a factorization in the mapping class group, one can construct a Lefschetz fibration over $\mathbb{S}^2$ by attaching $2$-handles along the corresponding vanishing cycles. 

Recently, there has been some interest in trying to understand the topological interpretation of various relations in the mapping class group. A particularly well understood case is the generalized lantern relation (also known as the daisy relation in literature), which corresponds to the symplectic operation of rational blowdown \cite{EG, EMVHM}. The reader is also referred to \cite{KS}, where a new symplectic surgery operation along some negative definite tree-shaped configurations of symplectic spheres was given. The first author and his collaborators have used some of these techniques in the following articles \cite{AP, AMon1, AMon2, AMS} to construct various exotic $4$-manifolds with small topology. One would hope to understand the topological meaning of many other well-known relations in the mapping class group, such as the generalized star relations and the generalized chain relations, and construct new symplectic 4-manifolds using the corresponding symplectic operations.

Motivated by the above mentioned results and problems, our goal in this paper is to construct new 4-dimensional symplectic surgery operations arising from the generalized star relations (GSR for short) $(t_{a_0}t_{a_1}t_{a_2} \cdots t_{a_{2g+1}})^{2g+1} \\ = t_{b_1} t_{b_2}^{g}t_{b_3}$ in the mapping class group $\Gamma_{g,3}$ of an orientable surface of genus $g\geq1$ with $3$ boundary components. By applying the sequence of GSR substitutions, chain substitutions, and conjugations to the families of hyperelliptic words $(c_1c_2 \cdots c_{2g-1}c_{2g}{c_{2g+1}}^2c_{2g}c_{2g-1} \cdots c_2c_1)^{2n} = 1$, $(c_1c_2 \cdots c_{2g}c_{2g+1})^{(2g+2)n} = 1$, and $(c_1c_2 \cdots c_{2g-1}c_{2g})^{2(2g+1)n} = 1$ in the mapping class group of the closed orientable surface of genus $g$ for any $g \geq 1$, we also construct families of Lefschetz fibrations. Furthermore, we show that total spaces of some of these Lefschetz fibraions are irreducible exotic symplectic $4$-manifolds. Using the degenerate cases of the generalized star relations, we also realize all elliptic Lefschetz fibrations and genus two Lefschetz fibrations over $\mathbb{S}^2$ with non-separating vanishing cycles. Some additional examples will be given in the upcoming work.

The organization of our paper is as follows. In Sections 2 and 3, we recall the main definitions and the results on mapping class groups and Lefschetz fibrations that will be used throughout the paper. In Section 4, we prove our main theorem, which shows that the GSR surgery can be defined symplectically. In Section 5, we construct new families of Lefschetz fibrations over $\mathbb{S}^2$ by applying GSR substitutions and chain substitutions to the words listed above. We also discuss some known examples,  but from a different point of view and exhibit some new words. The work undertaken in this paper will be extended in future research, where we will apply GSR surgery and further generalization of GSR surgery to constuct interesting symplectic $4$-manifolds. We will also study some applications to the mirror symmetry program and the relation of our work to \cite{AK}.

\section{Mapping Class Groups} 

\begin{defn} Let $\Sigma_{g}^k$ be a compact, connected, and oriented $2$-dimensional surface of genus $g$ with $k$ boundary components. Let $Diff^{+}\left( \Sigma_{g}^k\right)$ denote the group of all orientation-preserving self-diffeomorphisms of $\Sigma_{g}^k$ which are the identity on the boundary and $ Diff_{0}^{+}\left(\Sigma_{g}\right)$ be the subgroup of $Diff^{+}\left(\Sigma_{g}\right)$ consisting of all orientation-preserving self-diffeomorphisms that are isotopic to the identity. The isotopies are assumed to fix the points on the boundary. 
\emph{The mapping class group} $\Gamma_{g}^k$ of $\Sigma_{g}^k$ is defined to be the group of isotopy classes of orientation-preserving diffeomorphisms of $\Sigma_{g}^k$, i.e.,
\[
\Gamma_{g}^k=Diff^{+}\left( \Sigma_{g}^k\right) /Diff_{0}^{+}\left(
\Sigma_{g}^k\right) .
\]
For simplicity, we will use $\Sigma_g = \Sigma_g^0$ and $\Gamma_g = \Gamma_g^0$.
The hyperelliptic mapping class group $H_{g}$ of $\Sigma_{g}$ is defined as the subgroup of $\Gamma_g$ consisting of all isotopy classes which commute with the isotopy class of the hyperelliptic involution $\iota: \Sigma_{g}\rightarrow \Sigma_{g}$. 
\end{defn}

\begin{defn} Let $\alpha$ be a simple closed curve on $\Sigma_{g}^k$. A \emph{right handed} \emph{Dehn twist} about $\alpha$ is a diffeomorphism $t_{\alpha}: \Sigma_{g}^k\rightarrow \Sigma_{g}^k$ obtained by cutting the surface $\Sigma_{g}^k$ along $\alpha$ and gluing the ends back after rotating one of the ends $2\pi$ to the right. 

\end{defn}

It is well-known that the mapping class group $\Gamma_g^n$ is generated by Dehn twists. It is an elementary fact that the conjugate of a Dehn twist is again a Dehn twist. Indeed, if $\phi: \Sigma_{g}^k\rightarrow \Sigma_{g}^k$ is an orientation-preserving diffeomorphism, then it is easy to see that $\phi \circ t_\alpha \circ \phi^{-1} = t_{\phi(\alpha)}$. Also, if $\beta$ and $\gamma$ are two simple closed curves on $\Sigma_{g}^k$ with intersection number $\mu([\beta],[\gamma]) = l \geq 0$, then $t_{\alpha}(\beta) = {\alpha}^{l}\beta$. The following lemma is easy to prove.

\begin{lem}\label{com&braid.lem} Let $\alpha$ and $\beta$ be two simple closed curves on $\Sigma_{g}^k$. If $\alpha$ and $\beta$ are disjoint, then their corresponding Dehn twists satisfy the commutativity relation: $t_{\alpha}t_{\beta}=t_{\beta}t_{\alpha}.$ If $\alpha$ and $\beta$ transversely intersect at a single point, then their corresponding Dehn twists satisfy the braid relation: $t_{\alpha}t_{\beta}t_{\alpha}=t_{\beta}t_{\alpha}t_{\beta}.$
\end{lem}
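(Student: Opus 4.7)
The plan is to prove the two statements separately, each by a short geometric argument combined with the naturality of Dehn twists under diffeomorphism (namely $\phi \circ t_\alpha \circ \phi^{-1} = t_{\phi(\alpha)}$, which is noted just before the lemma).

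For the commutativity relation, I would choose disjoint closed annular neighborhoods $A_\alpha$ and $A_\beta$ of $\alpha$ and $\beta$, which exist because $\alpha \cap \beta = \emptyset$. The Dehn twists $t_\alpha$ and $t_\beta$ have representatives supported in $A_\alpha$ and $A_\beta$ respectively, and are the identity outside. Since the supports are disjoint, the two diffeomorphisms commute pointwise, so they commute at the level of isotopy classes. This is the easy half.

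For the braid relation, I would use the standard trick of reducing it to the identity $t_\alpha t_\beta(\alpha) = \beta$. First I would verify this identity by direct inspection in a regular neighborhood $N$ of $\alpha \cup \beta$: since $\alpha$ and $\beta$ meet transversely at a single point, $N$ is diffeomorphic to a once-punctured torus (more precisely, a genus-one surface with one boundary component), and the claim that $t_\alpha t_\beta$ carries $\alpha$ onto $\beta$ can be checked by drawing the action on a small model. Given this identity, the conjugation formula yields
\[
(t_\alpha t_\beta)\, t_\alpha\, (t_\alpha t_\beta)^{-1} \;=\; t_{t_\alpha t_\beta(\alpha)} \;=\; t_\beta,
\]
and rearranging gives $t_\alpha t_\beta t_\alpha = t_\beta t_\alpha t_\beta$, as required.

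The main obstacle is the geometric verification that $t_\alpha t_\beta(\alpha) = \beta$, since everything else is formal. I would address this by reducing to a concrete model of the regular neighborhood $N$ of $\alpha \cup \beta$ and exhibiting an isotopy (inside $N$, hence inside $\Sigma_g^k$) carrying $t_\alpha(t_\beta(\alpha))$ to $\beta$; alternatively, one can observe that $t_\alpha t_\beta$ restricted to $N$ is a diffeomorphism of order six up to boundary, cyclically permuting a natural triple of curves that includes $\alpha$ and $\beta$. Either variant suffices, and extending to all of $\Sigma_g^k$ is automatic since $t_\alpha$ and $t_\beta$ act as the identity outside $N$.
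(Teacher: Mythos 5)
The paper states this lemma without proof (it is dismissed as ``easy to prove''), so there is no argument of the authors to compare against; your proposal is the standard and correct one. Both halves are sound: disjoint supports give commutativity immediately, and the braid relation follows formally from the conjugation formula once the identity $t_{\alpha}t_{\beta}(\alpha)=\beta$ is verified in the genus-one, one-boundary-component model of a regular neighborhood of $\alpha\cup\beta$ --- exactly the treatment in Farb--Margalit's \emph{Primer}.
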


\subsection{Generalized Star Relations and Generalized Star Substitutions}

In what follows, we will consider a family of relations in the mapping class group which will be necessary for our main purpose. Some of these relations will aid us in defining the symplectic surgeries along certain plumbed $4$-manifolds. We will also apply these relations to known Lefschetz fibrations to construct new Lefschetz fibrations. Let us first recall the well known star relation in the mapping class group $\Gamma_{1}^3$.

\begin{lem}\label{star}{\bf (Star relation)}. Let $c_1$, $c_2$, $c_3$ denote the three boundary curves of surface $\Sigma_{1}^3$, and let $a_0$, $a_1$, $a_2$, $a_3$ be simple closed curves as in Figure~\ref{starcurves}. Then the following relation holds, which is known as the \textit{star relation}, in the mapping class group $\Gamma_{1}^3$
\begin{align}
(t_{a_0}t_{a_1}t_{a_2} t_{a_3})^{3} = t_{c_1}t_{c_2}t_{c_3} \label{eq:1}
\end{align}
\end{lem}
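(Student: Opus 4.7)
The plan is to prove the star relation via the Alexander method: a mapping class in $\Gamma_{1}^3$ is determined up to isotopy by its action on any finite filling family of simple closed curves and properly embedded arcs. The first task is to record the geometric data from Figure~\ref{starcurves}, namely that $a_1, a_2, a_3$ are pairwise disjoint, each meets $a_0$ transversely at a single point, and all four $a_i$ are disjoint from $c_1, c_2, c_3$. By Lemma~\ref{com&braid.lem}, this yields the algebraic relations
\[
t_{a_i} t_{a_j} = t_{a_j} t_{a_i} \text{ for } i,j \in \{1,2,3\}, \qquad t_{a_0} t_{a_i} t_{a_0} = t_{a_i} t_{a_0} t_{a_i} \text{ for } i \in \{1,2,3\},
\]
together with the centrality of each $t_{c_k}$.

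Set $T = t_{a_0} t_{a_1} t_{a_2} t_{a_3}$ and $B = t_{c_1} t_{c_2} t_{c_3}$. Since each $c_k$ is boundary-parallel, $B$ fixes the isotopy class of every simple closed curve contained in the interior of $\Sigma_{1}^3$. My plan is therefore to verify first that $T^3$ also fixes the isotopy classes of $a_0, a_1, a_2, a_3$ (supplemented by one or two auxiliary arcs to obtain a filling family), and then to conclude that $T^3 B^{-1}$ lies in the central rank-three abelian subgroup $\langle t_{c_1}, t_{c_2}, t_{c_3} \rangle$.

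To pin down the exponents of the $t_{c_k}$ in $T^3$, I would exploit the manifest $\mathbb{Z}/3$ symmetry of the configuration, which permutes $(a_1, a_2, a_3)$ and $(c_1, c_2, c_3)$ cyclically: this symmetry fixes both $T^3$ and $B$, so the three exponents of the boundary twists appearing in $T^3$ must be equal. The common value can then be determined by capping off one boundary component $c_k$ and comparing with a known chain relation on the resulting subsurface, or alternatively by computing the image of both sides under the abelianization homomorphism on $\Gamma_{1}^3$.

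The main obstacle is the curve-tracking step showing that $T^3$ fixes each $a_i$ up to isotopy. Rather than drawing pictures of thrice-twisted curves, I would work algebraically, repeatedly using the two relations above to bring $T^3$ into a form whose action on each $a_i$ is manifestly trivial. This mirrors the classical derivation of the star (or $E_6$-type) relation from iterated braid and commutation manipulations, and it is where essentially all of the work of the proof lies.
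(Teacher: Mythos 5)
Your strategy is sound and lands in the same general framework as the paper's, but it is organized quite differently. The paper does not prove Lemma~\ref{star} separately; it is the $g=1$ case of the trident relation, which the paper proves by the Alexander method applied to an explicit filling system consisting of the closed curves $a_2,\dots,a_{2g+1}$ \emph{and} the arcs $\lambda_1,\mu_1$: one computes directly that $T(a_m)=a_{m+1}$, $T(\lambda_m)=\lambda_{m+1}$ and $H^{-1}T(\mu_m)=\mu_{m+1}$ (where $H^2=t_{c_2}$), so that $T^{2g+1}$ and the boundary multitwist visibly agree on everything at once, boundary twisting included. You instead split the argument in two: first show $T^3$ fixes the closed curves $a_0,\dots,a_3$ (so that $T^3B^{-1}$ is a boundary multitwist), then pin down the exponents by the cyclic symmetry (which works, since $t_{a_1},t_{a_2},t_{a_3}$ commute so $\rho T\rho^{-1}=T$, and the boundary twists generate a free abelian group of rank $3$) together with capping off two boundary components and comparing exponent sums in $\Gamma_1^1\cong B_3$. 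The arcs in the paper's proof are exactly what your capping/abelianization step replaces, and your version has the advantage of avoiding arc-tracking; the paper's version has the advantage of generalizing uniformly to all $g$. Two caveats you should make explicit if you write this up: (i) the reduction of $T^3B^{-1}$ to a boundary multitwist needs the complement of $a_0\cup a_1\cup a_2\cup a_3$ in $\Sigma_1^3$ to consist of disks and boundary-parallel annuli, which holds for the configuration of Figure~\ref{starcurves} but must be checked; and (ii) the step you defer --- showing purely from the commutation and braid relations that $T^3$ commutes with each $t_{a_i}$ --- is genuinely the crux and is not a short manipulation (it amounts to the centrality of the Garside element $\Delta=(t_{a_0}t_{a_1}t_{a_2}t_{a_3})^3$ in the Artin group of type $D_4$); if you do not want to invoke Garside theory, you will end up tracking the curves geometrically after all, which is precisely what the paper does.
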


\begin{figure}[ht]
\begin{center}
\includegraphics[scale=.55]{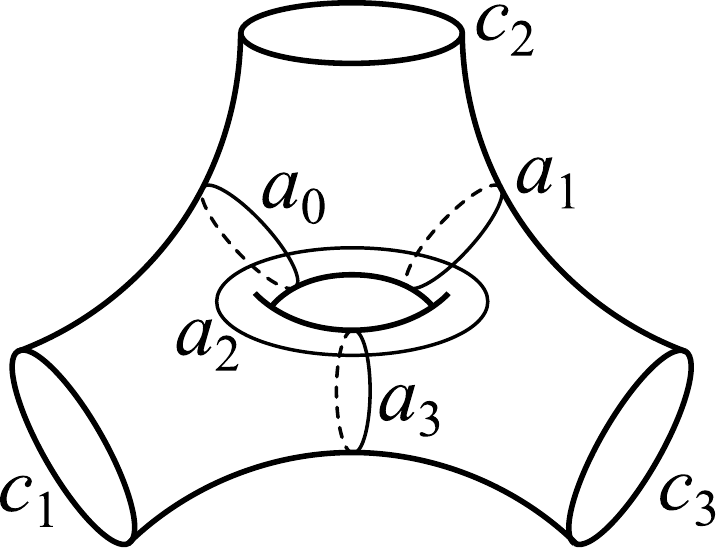}
\caption{The curves defining the star relation}
\label{starcurves}
\end{center}
\end{figure}

A generalization of the star relation considered in the works of Labruere and Paris\cite{LP} and also more recently by Parlak and Stukow in \cite{PS}, which inspired us to to define and explore GSR surgery operation. 
Let us now recall a lemma from \cite{PS} which will be of use to us. For convenience, we include the proof given  \cite{PS} (see also \cite{LP} for another proof of this lemma). The figures are borrowed from the \cite{PS}.

\begin{lem}{\bf (Generalized Star Relation)} Let $c_1$, $c_2$, $c_3$ denote the three boundary curves of a surface $\Sigma_{g}^3$, and let $a_0$, $a_1$, $\cdots$, $a_{2g+1}$ be simple closed curves as in Figure~\ref{curvesGSR}. Then the following relation holds, which is known as the \textit{trident relation} or \textit{a generalized star relation}, in the mapping class group $\Gamma_{g}^3$
\begin{align}
(t_{a_0}t_{a_1}t_{a_2} \cdots t_{a_{2g+1}})^{2g+1} =t_{c_1}{t_{c_2}}^{g}t_{c_3}
\end{align}
\end{lem}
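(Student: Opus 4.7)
The plan is to prove the generalized star relation by combining the base case $g=1$ (which is the star relation in Lemma~\ref{star}) with an inductive step, or equivalently by reducing the identity to an instance of the chain relation plus manipulations using the braid relation and commutativity recorded in Lemma~\ref{com&braid.lem}. As a preliminary step, I would read off from Figure~\ref{curvesGSR} the precise combinatorics of the curves: $a_0$ meets only $a_1$ (once, transversely) and lies in a one-holed torus neighborhood bounded by $c_1$; the curves $a_1,a_2,\ldots,a_{2g+1}$ form a chain of length $2g+1$ in which consecutive curves meet once and non-consecutive curves are disjoint; and $c_2$, $c_3$ are the remaining two boundary components of $\Sigma_g^3$, with $c_3$ bounding a regular neighborhood of the tail of the chain. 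This geometric picture is what makes the identification of the right-hand side unambiguous.

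Next I would write $T = t_{a_0}$ and $U = t_{a_1}t_{a_2}\cdots t_{a_{2g+1}}$, so the left-hand side is $(TU)^{2g+1}$. Since $T$ commutes with $t_{a_j}$ for every $j \geq 2$ and satisfies the braid relation $T t_{a_1} T = t_{a_1} T t_{a_1}$ with $t_{a_1}$, I expect to be able to move all copies of $T$ past $U$ at the cost of replacing them with $t_{a_1}$, producing an expression of the form (word in $t_{a_0},t_{a_1}$) $\cdot\, U^{2g+2} \cdot$ (a conjugating factor). At this point I would invoke the odd-length chain relation
\[
(t_{a_1}t_{a_2}\cdots t_{a_{2g+1}})^{2g+2} \;=\; t_{d_1}t_{d_2},
\]
where $d_1,d_2$ bound a regular neighborhood of $a_1\cup\cdots\cup a_{2g+1}$, and identify $d_1$ with $c_3$ and $d_2$ with a curve parallel to a boundary of the subsurface in which $a_0$ sits. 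The residual word in $t_{a_0}$ and $t_{a_1}$, supported on the one-holed torus bounded by $c_1$, should then reduce by the $g=1$ star relation (applied to that subsurface) to $t_{c_1}$ and the appropriate power $t_{c_2}^{g}$.

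As a complementary route, I would carry out an induction on $g$: cutting $\Sigma_g^3$ along a separating curve into a one-holed torus (carrying $a_0, a_1$) and a $\Sigma_{g-1}^{3}$-piece (carrying $a_2,\ldots,a_{2g+1}$ rearranged), the inductive hypothesis supplies the relation on the larger piece, and one then only has to verify how the two pieces glue. The base case $g=1$ is Lemma~\ref{star}, so only the inductive bookkeeping would remain.

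The hard part will not be the formal manipulation with braid and commutativity relations but rather the careful identification of the boundary curves that appear after an application of the chain relation with the curves $c_1$, $c_2$, $c_3$ depicted in Figure~\ref{curvesGSR}; in particular, explaining geometrically why $c_2$ appears to the power $g$ (rather than, say, being replaced by a product of $g$ parallel copies) requires tracing the regular neighborhoods through each conjugation. A useful sanity check along the way is the abelianization of $\Gamma_g^3$, where one can verify that the exponent sums of $t_{a_i}$ and $t_{c_j}$ on both sides match; this fixes the multiplicities and leaves only the isotopy-class identification, which is then forced by the geometric picture.
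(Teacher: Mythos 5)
Your strategy is genuinely different from the paper's, and as written it has a real gap. The paper does not derive the trident relation algebraically from other relations at all: it sets $T=t_{a_0}t_{a_1}\cdots t_{a_{2g+1}}$, computes the action of $T$ on the curves $a_2,\dots,a_{2g+1}$ and on a filling collection of arcs $\lambda_i,\mu_i$, checks that $T^{2g+1}$ and $t_{c_1}t_{c_2}^{g}t_{c_3}$ agree on every element of that collection, and concludes by the Alexander method. Your plan to reduce the identity to the odd chain relation plus the $g=1$ star relation is not substantiated, and several of its steps break down concretely. First, a letter count already rules out the normal form you predict: $(t_{a_0}U)^{2g+1}$ with $U=t_{a_1}\cdots t_{a_{2g+1}}$ consists of exactly $(2g+1)(2g+2)$ positive Dehn twists, which is precisely the length of $U^{2g+2}$. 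Braid moves, commutations, and Hurwitz-type conjugations preserve the number of positive twist letters, so after extracting $U^{2g+2}$ nothing remains to form the residual positive word that is supposed to become $t_{c_1}t_{c_2}^{g}$ --- unless the unspecified ``conjugating factor'' carries inverses, in which case the decomposition has no content until it is computed explicitly. Second, a word supported in a one-holed torus neighbourhood of $a_0\cup a_1$ can only produce the boundary twist of that subsurface; it cannot produce $t_{c_2}^{g}$, since $c_2$ is a different boundary component of $\Sigma_g^3$, so the step ``the residual word reduces by the $g=1$ star relation to $t_{c_1}$ and $t_{c_2}^{g}$'' is geometrically incoherent as stated.

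Third, the induction on $g$ has no well-defined inductive step: the mapping class $(t_{a_0}\cdots t_{a_{2g+1}})^{2g+1}$ does not preserve the separating curve along which you propose to cut (the chain curves cross it), and the exponent $2g+1$ changes with $g$, so the inductive hypothesis cannot simply be ``applied on the larger piece.'' Finally, the abelianization sanity check is vacuous for $g\ge 3$, where $H_1(\Gamma_g^3)$ kills all twists about non-separating curves, so it cannot ``fix the multiplicities'' of $t_{c_2}$. While the trident relation must in principle follow from the relations of any presentation of $\Gamma_g^3$, producing such a derivation explicitly is a substantial task that your sketch does not accomplish; the direct Alexander-method verification used in the paper (following Parlak and Stukow) is the standard and much shorter route, and I would recommend rewriting your argument along those lines.
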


\begin{proof} Let $T = t_{a_0}t_{a_1}t_{a_2} \cdots t_{a_{2g+1}}$ and let $a_{2g+2}$ simple closed curve as in Figure 2. It is easy to see that $T(a_{m}) = a_{m+1}$ for $m = 2,3,\cdots,2g + 1$, and $T(a_{2g+2}) = a_2$. Using these formulas, one can easily verify that 

\begin{figure}[ht]
\begin{center}
\includegraphics[scale=.55]{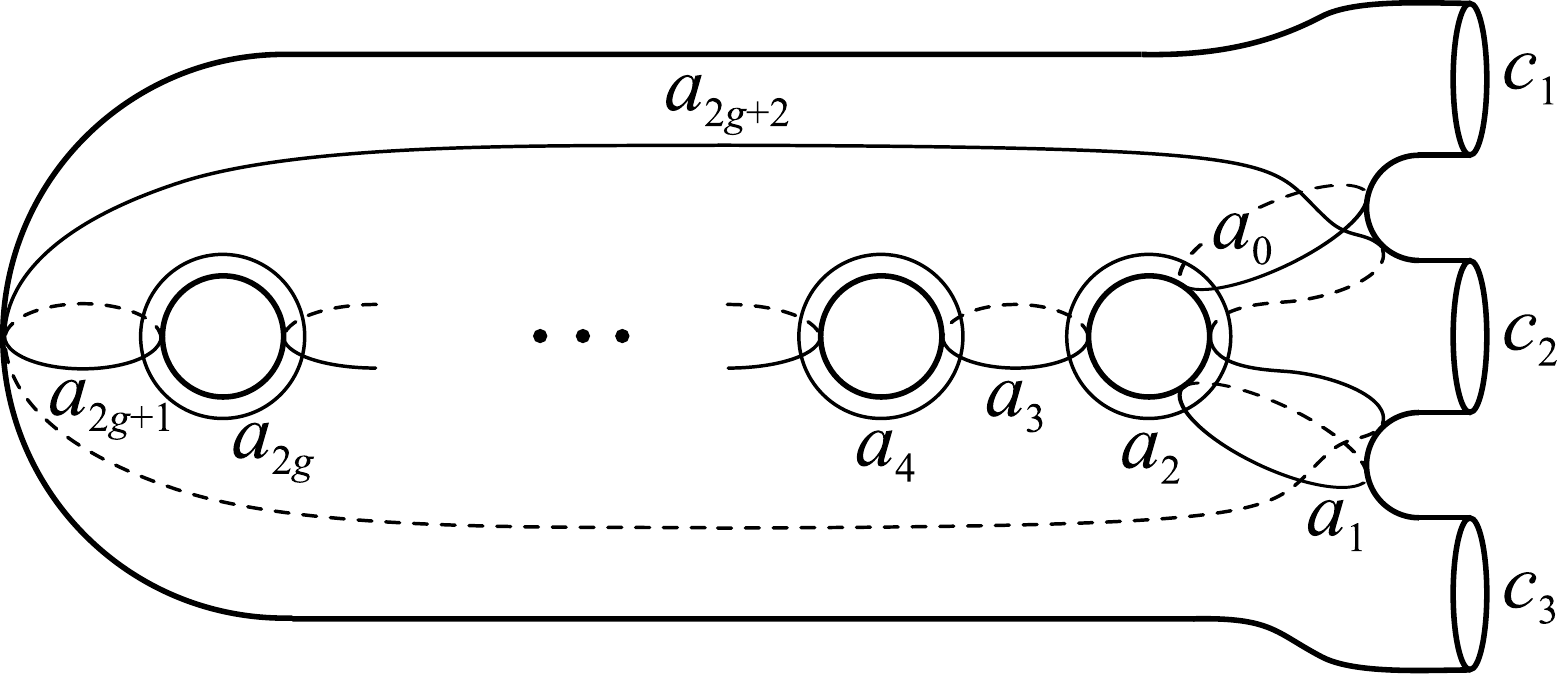}
\caption{The curves defining the GSR relation}
\label{curvesGSR}
\end{center}
\end{figure}

\begin{align}
T^{2g+1}(a_m) =a_m = t_{c_1}{t_{c_2}}^{g}t_{c_3}(a_{m})
\end{align}

for $m=2,3,\cdots,2g+1$.

Let us consider arcs $\lambda_1$, $\lambda_2$, $\cdots$, $\lambda_{2g+2}$ and $\mu_{1}$, $\mu_{2}$, $\cdots$, $\mu_{2g+1}$ as shown in the Figure~\ref{curves1}. Note that $T(\lambda_m)=\lambda_{m+1}$ for $m=1,2,\cdots,2g+1$. In particular, we have

\begin{align}
T^{2g+1}(\lambda_1) =\lambda_{2g+2} = t_{c_1}{t_{c_2}}^{g}t_{c_3}(\lambda_1)
\end{align}

As for the circles $\mu_m$, if $H$ is a half twist about the curve $c_2$ such that $H^{2} = t_{c_2}$, then
$H^{-1}T(\mu_{m}) = \mu_{m+1}$, for $m = 1,2,\cdots,2g$, and $T(\mu_{2g+1}) = \mu_{1}$. Therefore, the following equations holds
\begin{align}
T(H^{-1}T)^{2g}(\mu_{1}) = \mu_{1},\\
T^{2g+1}(\mu_{1}) = H^{2g}(\mu_{1}) = {t_{c_2}}^{g}(\mu_{1}) = t_{c_1}{t_{c_2}}^{g}{t_{c_3}}(\mu_{1}).
\end{align}

\begin{figure}[ht]
\begin{center}
\includegraphics[scale=.55]{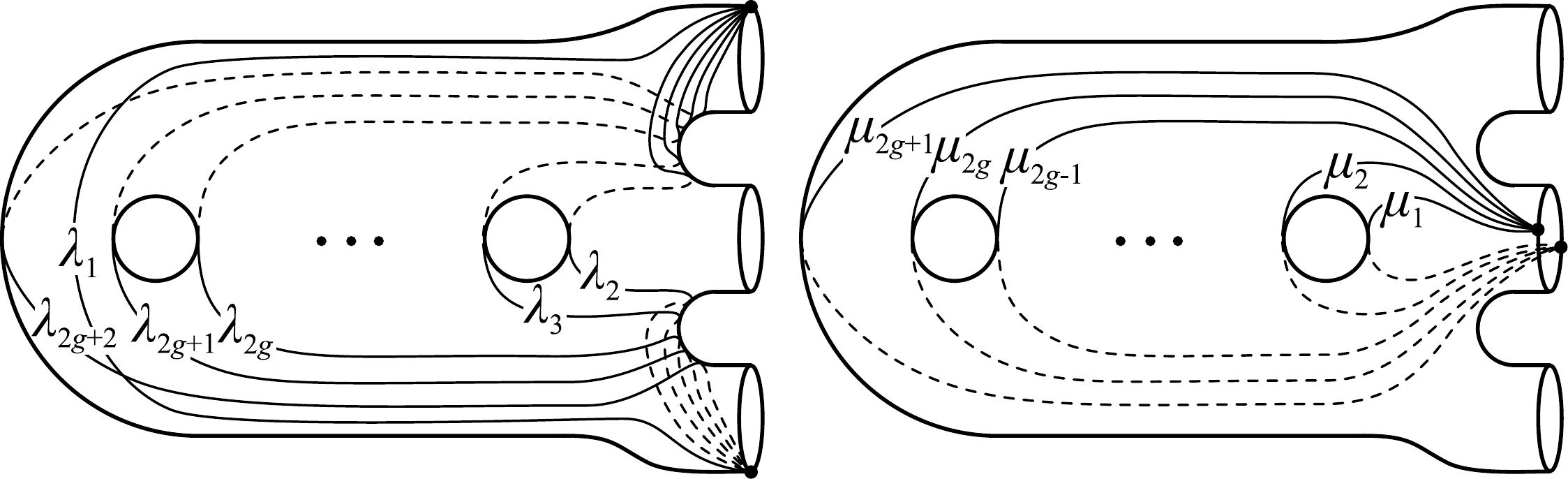}
\caption{Arcs $\lambda_i$ and $\mu_i$ in the proof of the trident relation}
\label{curves1}
\end{center}
\end{figure}

The above equations imply that $T_{2g+1}$ and $t_{c_1}{t_{c_2}}^{g}t_{c_3}$ agree on the set of all simple closed curves and arcs listed. Since this set $\{a_2$, $a_3$, $\cdots$, $a_{2g+1}$,$\lambda_{1}$, $\mu_{1}\}$ satisfies the assumptions of the Alexander Method, we can conclude that $T^{2g+1} =  t_{c_1}{t_{c_2}}^{g}t_{c_3}$ .  
\end{proof}

\begin{rmk}\label{remk1} When $g=1$, the trident relation is known as the \textit{star relation} (see \cite{De}, \cite{Jo}). Also, notice that the generalized star relation generalizes both the star and the chain relations in the mapping class group. This is left as an easy exercise to a reader. \end{rmk}

Let us recall the chain relation in the mapping class group, which will also be used in our computation. The chain relation can also be interpreted as a symplectic surgery operation, using the approach given in our paper (Note the Remark~\ref{remk1}).

\begin{defn} Let $a_1$, $\ldots$, $a_m$ be an ordered $m$-tuple of simple closed curves on an orientable surface $\Sigma_{g}$. It is called \emph{a chain} of lenght $m$ if the curves satisfy the following two conditions:
\begin{enumerate}
\item the curves $a_i$ and $a_{i+1}$ intersect at one point for all $i$ between $1$ and $n-1$,
\item the curves $a_i$ and $a_j$ do not intersect if $|i-j| > 1.$
\end{enumerate}
\end{defn}

The following two lemmas are well known, see for instance \cite{EN}.

\begin{lem}{\bf (Chain relation)}\label{chain} Let $a_1$, $\ldots$, $a_m$, where $m\geq~1$, be a chain of simple closed curves on an orientable surface $\Sigma_{g}$. If $m$ is odd, a regular neighbourhood of a chain $\bigcup\limits_{i=1}^{m}a_i$ is a subsurface of $\Sigma_{g}$ which is of genus $h = (m-1)/2$ and has two boundary components $d_1$ and $d_2$. Then the following relation holds in $\Gamma_{h}^{2}$ 
		\[(t_{a_1}\cdots t_{a_m})^{m} = t_{d_1}t_{d_2}. \]
		If $m$ is even, the boundary of a regular neighbourhood of $\bigcup\limits_{i=1}^{m}a_i$ is a subsurface of $\Sigma_{g}$ which is of genus $h = m/2$ and has one boundary components $c$, then in $\Gamma_{h}^{1}$
		\[ (t_{a_1}\cdots t_{a_m})^{2m+2} = t_{c} \] 
\end{lem}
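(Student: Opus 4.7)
The plan is to follow the same strategy used above for the generalized star relation: reduce the problem to the regular neighborhood and verify the equality via the Alexander method. Let $N$ denote a regular neighborhood of $\bigcup_{i=1}^{m} a_i$, which is $\Sigma_h^2$ when $m$ is odd and $\Sigma_h^1$ when $m$ is even, and set $T = t_{a_1}t_{a_2}\cdots t_{a_m}$. Because both sides of the claimed relation are supported on $N$ and restrict to the identity on $\partial N$, it suffices to establish the identity inside the mapping class group of $N$.

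First I would compute the action of $T$ on the chain curves themselves, using the explicit formula for $t_\alpha(\beta)$ recalled in Section 2 together with the commutativity of disjoint Dehn twists. Because consecutive $a_i$'s meet transversely in one point and non-consecutive ones are disjoint, a direct calculation shows that $T$ permutes the chain up to twisting in a cyclic pattern, and the relevant power of $T$ fixes each $a_i$ up to isotopy. This matches the behavior of $t_{d_1}t_{d_2}$ (respectively $t_c$), which fixes every $a_i$ since the chain curves are disjoint from $\partial N$.

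Next I would introduce auxiliary arcs so that $\{a_i\} \cup \{\text{arcs}\}$ satisfies the hypotheses of the Alexander method on $N$. In the odd case the natural choice is one or more arcs $\lambda_j$ joining the two boundary components $d_1$ and $d_2$; in the even case, arcs $\lambda_j$ with both endpoints on the single boundary $c$. The twist product $T$ acts on these arcs essentially as a cyclic shift, and by iterating one checks that the prescribed power of $T$ produces the same arcs, up to appropriate boundary twists, as $t_{d_1}t_{d_2}$ (respectively $t_c$) does. The Alexander method then forces the two mapping classes to coincide.

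The main obstacle is the bookkeeping in the last step: tracking precisely how many copies of each boundary twist are accumulated by the arcs as $T$ is iterated, and verifying that this matches the exponent appearing in the stated formula. A useful alternative that I would keep in reserve is the hyperelliptic double cover $N \to D^2$ branched at $m+1$ points, under which $T$ is the lift of the product $\sigma_1\sigma_2\cdots\sigma_m$ of adjacent half-twists in the braid group $B_{m+1}$; the chain relations then descend from the classical identity saying that $(\sigma_1\sigma_2\cdots\sigma_m)^{m+1}$ equals the full twist $\Delta^2$ generating the center of $B_{m+1}$. This route largely sidesteps the direct arc bookkeeping and gives the cleanest route to both cases in a uniform manner.
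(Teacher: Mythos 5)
The paper gives no proof of this lemma at all: it is stated as well known with a pointer to \cite{EN}, so there is nothing internal to compare your argument against. Judged on its own, your ``reserve'' route --- the double cover $N \to D^{2}$ branched at $m+1$ points, under which $t_{a_i}$ is the lift of the half-twist $\sigma_i$ and the chain relation descends from the classical identity $(\sigma_1\sigma_2\cdots\sigma_m)^{m+1}=\Delta^{2}$ generating the center of $B_{m+1}$ --- is the standard and cleanest proof (it is essentially the one in \cite{FM}), and it does handle both parities uniformly: when $m$ is odd the boundary circle of the disk has two preimages and the full twist lifts to $t_{d_1}t_{d_2}$, while when $m$ is even it has a single connected preimage and the full twist lifts only to a square root of $t_c$, which is exactly where the exponent $2m+2$ comes from. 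Your primary route, the Alexander method in the spirit of the paper's proof of the generalized star relation, is viable but as written is only a plan: the assertions that $T$ ``permutes the chain up to twisting in a cyclic pattern'' and that the arcs return ``up to appropriate boundary twists'' are precisely the computations that constitute the proof, and you have deferred them. I would promote the branched-cover argument to the main proof and drop the arc bookkeeping entirely.

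One concrete point you should not gloss over: your braid-group identity yields $(t_{a_1}\cdots t_{a_m})^{m+1}=t_{d_1}t_{d_2}$ in the odd case, whereas the statement as printed has exponent $m$. The exponent $m+1$ is the correct one --- test $m=1$, where $N$ is an annulus and $t_{a_1}^{2}=t_{d_1}t_{d_2}$ because both boundary curves are isotopic to the core, while $t_{a_1}^{1}$ is not. So the printed odd case contains a typo, your argument proves the corrected version, and had you actually carried out the Alexander-method verification of the exponent-$m$ identity, the bookkeeping would not have closed.
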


The next lemma can be proved using an induction.

\begin{lem}\label{power} Let $a_1, \cdots, a_m$ be a chain of lenght $m$ on $\Sigma_{g}$. For any $k=1, \cdots, m-1$, the following relation holds up to the braid equivalence: 
\begin{align*}
{(t_{c_1}t_{c_2} \cdots t_{c_m})}^{k+1} = {(t_{c_1}t_{c_2}\cdots t_{c_k})}^{k+1} (t_{c_{k+1}}t_{c_k}\cdots t_{c_2} t_{c_1})\\(t_{c_{k+2}}t_{c_{k+1}} \cdots t_{c_3} t_{c_2}) \cdots (t_{c_m}t_{c_{m-1}} \cdots t_{c_{m-k+1}} t_{c_{m-k}})
\end{align*}
\end{lem}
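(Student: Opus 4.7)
The plan is to prove the identity by induction on $k$, using just the two relations of Lemma~\ref{com&braid.lem}---commutativity of disjoint Dehn twists and the braid relation for transversely intersecting ones---which together generate braid equivalence among products of chain Dehn twists. Write $P_m = t_{c_1}t_{c_2}\cdots t_{c_m}$ and $R_{k,j}=t_{c_{k+j}}t_{c_{k+j-1}}\cdots t_{c_{j+1}}t_{c_j}$, so that the target identity reads
\[
P_m^{k+1}\;=\;P_k^{k+1}\,R_{k,1}R_{k,2}\cdots R_{k,m-k}.
\]

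The base case $k=1$ asserts $P_m^{2} = t_{c_1}^{\,2}(t_{c_2}t_{c_1})(t_{c_3}t_{c_2})\cdots(t_{c_m}t_{c_{m-1}})$. I would prove it by a secondary induction on $m$: the case $m=2$ is the single-braid identity $(t_{c_1}t_{c_2})^2 = t_{c_1}^{\,2}t_{c_2}t_{c_1}$, and the step $m\to m+1$ reduces, after invoking the inner hypothesis, to the intermediate equality
\[
t_{c_{m+1}}\,(t_{c_1}\cdots t_{c_m})\,t_{c_{m+1}} \;=\; (t_{c_1}\cdots t_{c_m})(t_{c_{m+1}}t_{c_m}),
\]
which follows by commuting $t_{c_{m+1}}$ past $t_{c_1},\ldots,t_{c_{m-1}}$ (disjoint chain elements) and applying one braid relation $t_{c_m}t_{c_{m+1}}t_{c_m} = t_{c_{m+1}}t_{c_m}t_{c_{m+1}}$.

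For the inductive step $k-1\to k$, the workhorse will be a \emph{migration identity} of the form $R_{k-1,j}\cdot P_m = P_m\cdot R_{k-1,j-1}$, valid for $j\geq 2$ whenever all involved indices remain inside $[1,m]$; this is proved by pushing the leftmost letter of $R_{k-1,j}$ through $P_m$ via a sequence of commutations punctuated by a single braid move, depositing a residual twist on the right. Writing $P_m^{k+1}=P_m^{k}\cdot P_m$ and substituting the inductive hypothesis into $P_m^{k}$, I would iteratively migrate the trailing $P_m$ leftward across the blocks $R_{k-1,m-k+1},\ldots,R_{k-1,2}$. The prefix produced after these migrations collapses to $P_k^{k+1}$ via the sub-case $(k-1,k)$ of the inductive hypothesis (available because $k-1\geq 1$), and the residual $R$-blocks reassemble into $R_{k,1}R_{k,2}\cdots R_{k,m-k}$ after one final secondary induction on $m-k$ (with trivial base $m=k+1$) that again uses the migration identity.

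The principal obstacle is purely combinatorial bookkeeping: one must track at each stage that the migration identity respects the index range, that the prefix collapse invokes the inductive hypothesis non-circularly, and that the cumulative effect of the shifts and braid moves reproduces the claimed right-hand side exactly, with no extraneous letters left over. Since every individual manipulation used is either a commutation or a braid move from Lemma~\ref{com&braid.lem}, the resulting equality holds up to the braid equivalence asserted in the statement.
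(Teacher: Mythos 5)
Your induction is correct and is consistent with the paper, which offers no argument for this lemma beyond the remark that it ``can be proved using an induction'': the base case $k=1$, the migration identity $R_{k-1,j}P_m=P_mR_{k-1,j-1}$ (itself an iteration of the single-letter case $t_{c_j}P_m=P_mt_{c_{j-1}}$ for $j\geq 2$), and the inductive step all check out. The only bookkeeping slip is that after the migrations the prefix $P_{k-1}^{k}R_{k-1,1}P_m$ collapses to $P_k^{k+1}(t_{c_{k+1}}\cdots t_{c_m})$ rather than to $P_k^{k+1}$ alone, but the trailing factor is exactly what is needed to upgrade each residual block $R_{k-1,j}$ to $R_{k,j}=t_{c_{k+j}}R_{k-1,j}$ by pure commutations, so the argument closes as you describe.
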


\begin{defn}\label{GSR substitution}\rm
Let $d_1,\ldots,d_m$ and $e_1,\ldots, e_k$ be simple closed curves on $\Gamma_g^k$, and let $W$ be a product $W=t_{d_1}t_{d_2}\cdots t_{d_l}t_{e_m}^{-1}\cdots t_{e_2}^{-1}t_{e_1}^{-1}$. Suppose that $R=1$ in $\Gamma_g^k$. Let $\varrho$ be a word in $\Gamma_g^k$ including $t_{d_1}t_{d_2}\cdots t_{d_l}$ as a subword: 
\begin{align*}
\varrho=U\cdot t_{d_1}t_{d_2}\cdots t_{d_l} \cdot V, 
\end{align*}
where $U$ and $V$ are words. Thus, we obtain a new word in $\Gamma_g^n$, denoted by $\varrho^\prime$, as follows:
\begin{align*}
\varrho^\prime:&=U\cdot t_{e_1}t_{e_2}\cdots t_{e_m} \cdot V.
\end{align*}

\begin{figure}[ht]
\begin{center}
\includegraphics[scale=.83]{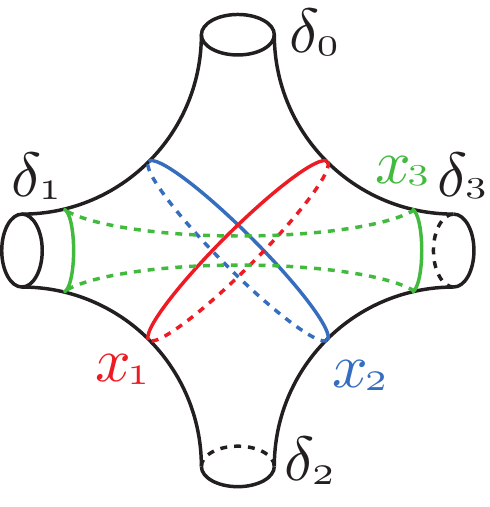}
\caption{The curves defining the lantern relation}
\label{fig:hyper}
\end{center}
\end{figure}

Then, we say that $\varrho^{\prime}$ is obtained by applying a $R$-\textit{substitution} to $\varrho$. In particular, if $R$ is a generalized star relator on genus $g$ surface with $3$ boundary components, then we say that $\varrho^{\prime}$ is obtained by applying a \textit{GSR substitution of type $g$} to $\varrho$. The substitution changing $\varrho^{\prime}$ to $\varrho$ will be called \textit{the reverse GSR substitution of type $g$}. Also, if $R$ is a lantern relator (or, respectively, 
a chain relator), then we say that $\varrho^{\prime}$ is obtained by applying a \textit{lanter substitution} (or, respectively, a \textit{chain substitution}) to $\varrho$. Refer to the Figure~\ref{fig:hyper} for the lantern relation and Lemma~\ref{chain} for the chain relation. 
\end{defn}

\section{Lefschetz fibrations}

\begin{defn}\label{LF}\rm
Let $Y$ be a closed, oriented smooth $4$-manifold. A smooth map $f : Y \rightarrow \mathbb{S}^2$ is a genus $g$ \textit{Lefschetz fibration} if it satisfies the following condition: \\
(a) $f$ is a smooth $\Sigma_g$-bundle away from finitely many critical values $p_1,\ldots, p_m \in \mathbb{S}^2$,\\
(b) for each critical value $p_i$ $(i=1,\ldots,m)$, there exists a unique critical point $q_i$ in the \textit{singular fiber} $f^{-1}(p_i)$ such that about each point $q_i$ and $p_i$ there are local complex coordinate charts agreeing with the orientations of $Y$ and $\mathbb{S}^2$ on which $f$ is given by $f(z_{1},z_{2})=z_{1}^{2}+z_{2}^{2}$, \\
(c) $f$ is relatively minimal, which means no fiber contains a sphere of self-intersection $-1$.
\end{defn}

Each singular fiber of Lefschetz fibration is obtained by collapsing a simple closed curve, the \textit{vanishing cycle}, in the regular fiber. If the curve is a non-separating, then the singular fiber is called  \textit{non-separating}, otherwise it is called  \textit{separating}. The monodromy of the Lefschetz fibration around a singular fiber is given by a right handed Dehn twist along the corresponding vanishing cycle. A Lefschetz fibration $f : Y \rightarrow \mathbb{S}^2$ is \textit{holomorphic} if there are complex structures on both $Y$ and $\mathbb{S}^2$ with holomorphic map $f$. For a genus $g$ Lefschetz fibration over $\mathbb{S}^2$, the product of right handed Dehn twists $t_{\alpha_i}$ about the vanishing cycles $\alpha_i$, for $i = 1,\ldots, m$, gives us the global monodromy of the Lefschetz fibration, the relation $t_{\alpha_1} t_{\alpha_2} \cdots t_{\alpha_m}=1$ in $\Gamma_g$. Such a relation is called the \textit{positive relator}. Conversely, given a positive relator in $\Gamma_g$, one can construct a genus $g$ Lefschetz fibration over $\mathbb{S}^2$ with the vanishing cycles $\alpha_1,\ldots, \alpha_m$. 

By theorems of Kas \cite{Kas} and Matsumoto \cite{Ma}, if $g\geq 2$, there exists a one-to-one correspondence between the isomorphism classes of Lefschetz fibrations and the equivalence classes of positive relators modulo simultaneous conjugations
\begin{align*}
t_{\alpha_1} t_{\alpha_2} \cdots  t_{\alpha_m} \sim t_{\phi(\alpha_1)} t_{\phi(\alpha_2)} \cdots t_{\phi(\alpha_m)} \ \ {\rm for \ all} \ \phi \in \Gamma_g
\end{align*}
and elementary transformations 
\begin{align*}
&t_{\alpha_1} \cdots t_{\alpha_{i-1}} t_{\alpha_i} t_{\alpha_{i+1}} t_{\alpha_{i+2}} \cdots t_{\alpha_m}& &\sim& &t_{\alpha_1} \cdots t_{\alpha_{i-1}} t_{t_{\alpha_i}(\alpha_{i+1})} t_{\alpha_i} t_{\alpha_{i+2}} \cdots t_{\alpha_m},&\\
&t_{\alpha_1} \cdots t_{\alpha_{i-2}} t_{\alpha_{i-1}} t_{\alpha_i} t_{\alpha_{i+1}} \cdots t_{\alpha_m}& &\sim& &t_{\alpha_1} \cdots t_{\alpha_{i-2}} t_{\alpha_i} t_{t_{\alpha_i}^{-1}(\alpha_{i-1})} t_{\alpha_{i+1}} \cdots  t_{\alpha_m}.&
\end{align*}
Let us recall that $\phi t_{\alpha_i}\phi^{-1}=t_{\phi(\alpha_i)}$. We will denote the isomorphism class of a Lefschetz fibration associated to a positive relator $\varrho \ \in \Gamma_g$ by $f_\varrho$. 

For a Lefschetz fibration $f:Y\rightarrow \mathbb{S}^2$, a map $\tau:\mathbb{S}^2\rightarrow Y$ is called a \textit{section} of $f$ if $f\circ \tau={\rm id}_{\mathbb{S}^2}$. The self-intersection of the section $\tau$ defined to be the self-intersection of the homology class $[\tau(\mathbb{S}^2)]$ in $H_2(Y;\mathbb{Z})$. 
Let $\delta_1,\delta_2,\ldots,\delta_k$ be $k$ boundary components of $\Sigma_g^k$. 
If there exists a lift of a positive relator $\varrho = t_{\alpha_1} t_{\alpha_2} \cdots t_{\alpha_m} = 1$ in $\Gamma_g$ to $\Gamma_g^k$ given by
\begin{align}\label{sections}
t_{\widetilde{\alpha}_1} t_{\widetilde{\alpha}_2} \cdots t_{\widetilde{\alpha}_m} = t_{\delta_1}^{n_1} t_{\delta_2}^{n_2} \cdots t_{\delta_k}^{n_k}, 
\end{align}
then $f_\varrho$ admits $k$ disjoint sections and the self-intersection of the $j$-th section is $-n_k$. Here, $t_{\widetilde{\alpha}_i}$ is a Dehn twist mapped to $t_{\alpha_i}$ under $\Gamma_g^k \to \Gamma_g$. Conversely, if a genus-$g$ Lefschetz fibration admits $k$ disjoint sections with self-intersections $-n_{1}$, $\cdots$, $-n_{k}$, then there is such a relation in $\Gamma_g^k$. 

For our purposes, let us also recall the signature formula of Matsumoto and Endo for hyperelliptic Lefschetz fibrations. We use this formula below and also in Section \ref{words}, to compute the signature of the Lefschetz fibrations obtained by the (generalized) star substitutions.

\begin{thm}[\cite{Ma1},\cite{Ma},\cite{E}]\label{sign} Let $f:Y\rightarrow \mathbb{S}^2$ be a genus $g$ hyperelliptic Lefschetz fibration. Let $s_0$ and $s=\Sigma_{h=1}^{[g/2]}s_h$ be the number of non-separating and separating vanishing cycles of $f$, where $s_h$ denotes the number of separating vanishing cycles which separate the surface of genus $g$ into two surfaces, one of which has genus $h$. Then, we have the following formula for the signature 
\begin{eqnarray*}
\sigma(Y)=-\frac{g+1}{2g+1}s_0+\sum_{h=1}^{[\frac{g}{2}]}\left(\frac{4h(g-h)}{2g+1}-1\right)s_{h}.
\end{eqnarray*}
\end{thm}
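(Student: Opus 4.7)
The plan is to compute $\sigma(Y)$ via Meyer's signature cocycle, localized at the singular fibers. Since the monodromy of a hyperelliptic Lefschetz fibration factors through $H_g\subset\Gamma_g$, I would first show that Meyer's signature $2$-cocycle becomes a coboundary on $H_g$, so that it can be trivialized by a homomorphism $\phi:H_g\to\mathbb{Q}$. For $g=1,2$ this goes back to Meyer, and for $g\geq 3$ it is Endo's extension, which uses the Birman--Hilden presentation of $H_g$ together with the vanishing of $H_2(H_g;\mathbb{Q})$. Granting $\phi$, writing the global monodromy as $t_{\alpha_1}\cdots t_{\alpha_m}=1$ yields
\[\sigma(Y)=\sum_{i=1}^{m}\phi(t_{\alpha_i}).\]

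Next, because $\phi$ is a homomorphism, $\phi(t_\alpha)$ depends only on the conjugacy class of $t_\alpha$ in $H_g$, and hence only on the topological type of the simple closed curve $\alpha$: a single value $\phi_0$ for every non-separating curve, and a value $\phi_h$ for each separating curve that splits $\Sigma_g$ into pieces of genera $h$ and $g-h$. Consequently
\[\sigma(Y)=s_0\phi_0+\sum_{h=1}^{[g/2]}s_h\phi_h,\]
and the theorem is reduced to the identification of the finitely many constants $\phi_0,\phi_1,\ldots,\phi_{[g/2]}$.

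To pin down the constants I would test $\phi$ against explicit hyperelliptic Lefschetz fibrations whose signatures are known independently. Natural choices are the chain-relation fibrations associated with $(t_{c_1}t_{c_2}\cdots t_{c_{2g+1}})^{2(2g+2)}=1$, whose total spaces are branched double covers of Hirzebruch surfaces with directly computable signature, together with auxiliary fibrations built by bounding-pair or chain substitutions that isolate separating vanishing cycles of a prescribed genus $h$. Matching the two expressions for $\sigma$ on enough such examples gives a linear system whose solution is $\phi_0=-(g+1)/(2g+1)$ and $\phi_h=4h(g-h)/(2g+1)-1$, reproducing the stated formula.

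The main obstacle, both conceptually and technically, is producing the homomorphism $\phi$: Meyer's signature cocycle is only a quasi-homomorphism on $\Gamma_g$, and it is a special feature of the hyperelliptic subgroup that this cocycle becomes a coboundary. Verifying this amounts to checking that the candidate values $\phi_0$ and $\phi_h$ satisfy every relator in a finite presentation of $H_g$, a calculation that is routine in principle but somewhat lengthy in practice. Everything else in the argument --- the localization of signature, the dependence on topological type alone, and the evaluation on test fibrations --- is standard once $\phi$ is in hand.
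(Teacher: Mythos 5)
The paper offers no proof of this theorem: it is imported wholesale from Matsumoto and Endo (the references \cite{Ma1}, \cite{Ma}, \cite{E}), so the only meaningful comparison is with those sources, and your outline is indeed the strategy of Endo's paper. However, there is a genuine error at the centre of your formulation: the trivializing object $\phi$ cannot be a \emph{homomorphism} $H_g\to\mathbb{Q}$. The abelianization of the hyperelliptic mapping class group is finite (for $g=2$, where $H_2=\Gamma_2$, it is $\mathbb{Z}/10$), so every homomorphism to $\mathbb{Q}$ vanishes identically; moreover, if $\phi$ were multiplicative, your identity $\sigma(Y)=\sum_i\phi(t_{\alpha_i})$ would collapse to $\phi(t_{\alpha_1}\cdots t_{\alpha_m})=\phi(1)=0$ for every fibration, which is absurd. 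What Endo actually constructs, using $H^2(H_g;\mathbb{Q})=0$, is a $1$-cochain $\phi:H_g\to\mathbb{Q}$ with $\delta\phi=\tau_g|_{H_g}$, i.e.\ $\phi(ab)=\phi(a)+\phi(b)-\tau_g(a,b)$ --- a quasi-homomorphism, not a homomorphism; the whole point is that Meyer's cocycle is cohomologically trivial on $H_g$ without being identically zero.

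The two consequences you derive from multiplicativity do survive, but for different reasons, and you need to supply them: $\phi$ is a class function because it is the \emph{unique} cochain cobounding $\tau_g$ (uniqueness uses $H^1(H_g;\mathbb{Q})=0$) and $\tau_g$ is conjugation-invariant; and the signature formula comes from Novikov additivity applied to the complement of the singular fibers, where the sum $\sum_k\tau_g(t_{\alpha_1}\cdots t_{\alpha_k},t_{\alpha_{k+1}})$ telescopes via $\delta\phi=\tau_g$ into $\sum_i\phi(t_{\alpha_i})$, \emph{plus} the local contributions of the singular fiber neighborhoods --- it is the signature $-1$ of a neighborhood of a separating singular fiber that produces the ``$-1$'' in the coefficient of $s_h$, and your displayed formula omits these local terms. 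With those repairs, your plan of pinning down the finitely many constants by evaluating on test fibrations with independently computable signature (e.g.\ the chain-relation fibrations with branched double-cover descriptions, as in Lemmas~\ref{E} and~\ref{E3} of the paper) is legitimate and is essentially how the values $-\tfrac{g+1}{2g+1}$ and $\tfrac{4h(g-h)}{2g+1}-1$ are identified.
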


Let us now apply the formula~\ref{sections} given above to the study the Lefschetz fibrations arising from the star and the generalized star relations of genus $1$ and genus $g \geq 2$ respectively, and determine their sections. This will be helpful in Section~\ref{GSR} in our study of certain Stein fillings determined by these Lefschetz fibrations.

\begin{exmp}\label{Ex13} In this example, we consider the generalized star relations $(t_{a_0}t_{a_1}t_{a_2} \cdots t_{a_{2g+1}})^{2g+1} = t_{b_1} t_{b_2}^{g}t_{b_3}$ in the mapping class group $\Gamma_{g}^{3}$ and lifting this relation to $(t_{a_0}t_{a_1}t_{a_2} \cdots t_{a_{2g+1}})^{2g+1} = 1$ in $\Gamma_g$, we obtain hyperelliptic genus $g$ Lefschetz fibration over $\mathbb{S}^2$ with three disjoint sections, say $\mathcal{S}_{1}$, $\mathcal{S}_{2}$, and $\mathcal{S}_3$, with self-intersections $-1$, $-g$, and $-1$, respectively. Notice that when $g=1$, i.e., in star relation case, the corresponding fibration  with monodromy $(abaa)^{3} = 1$ admits three disjoint $-1$ sections. When $g=1$, the total space of this fibration is $\CP\#9\CPb$. For an arbitrary $g$, the total space of the fibration given by $(t_{a_0}t_{a_1}t_{a_2} \cdots t_{a_{2g+1}})^{2g+1} = 1$ can be identified with $C(g)$ given as in \ref{families}. The Euler characteristic and the signature of
the fibration are given as follows: $(e, \sigma) = (4g^{2} + 2g + 6, -2g^2 -4g -2)$. By blowing down two $-1$ sphere sections $\mathcal{S}_{1}$ and $\mathcal{S}_{3}$ for $g \geq 2$, we obtain genus $g$ Lefschetz pencil with two base points on complex surface $W(g)$ with the invariants $(e, \sigma) = (4g^{2} + 2g + 4, -2g^2 -4g)$. When $g=1$, blowing down all three sections  $\mathcal{S}_{1}$, $\mathcal{S}_{2}$, and $\mathcal{S}_3$, determines genus one Lefschetz pencil on $\CP\#6\CPb$ with three base points. The discussion here will be used below and also in Section~\ref{GSR}, in our construction of certain Stein fillings.
\end{exmp}

\begin{lem}
Let $\rho$ and $\rho'$ be positive relators in $\Gamma_g$ and let $f_{\rho}: Y_\rho \rightarrow \mathbb{S}^2$ and $f_{\rho'}: Y_{\rho'} \rightarrow \mathbb{S}^2$ be the corresponding Lefschetz fibrations, respectively. Suppose that the relator $\rho'$ is obtained by applying a star-substitution to the relator $\rho$. Then Euler characteristic and the signature of a Lefschetz fibration $f_{\rho'}: Y_{\rho} \rightarrow \mathbb{S}^2$ with monodromy $\rho'$ computed as follows: $e(Y_{\rho'})=e(Y_{\rho})-9$, $\sigma(Y_{\rho'})=\sigma(Y_{\rho})+5$
\end{lem}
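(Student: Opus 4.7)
The Euler characteristic part follows immediately from the formula
\[
e(Y) \;=\; \chi(\Sigma_g)\chi(\mathbb{S}^2)+m \;=\; 4-4g+m
\]
for a genus-$g$ Lefschetz fibration over $\mathbb{S}^2$ with $m$ nodal singular fibers, since each node raises the Euler characteristic by one over the generic fiber. A star substitution trades the twelve-letter word $(t_{a_0}t_{a_1}t_{a_2}t_{a_3})^{3}$ for the three-letter word $t_{c_1}t_{c_2}t_{c_3}$, reducing $m$ by nine, so $e(Y_{\rho'})=e(Y_\rho)-9$.

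For the signature I would localize the change via Novikov additivity. Choose a closed disk $\mathcal{D}\subset\mathbb{S}^2$ containing in its interior exactly the twelve critical values of $f_\rho$ that arise from the substituted subword; set $V_{\mathrm{long}}=f_\rho^{-1}(\mathcal{D})$ and $X=f_\rho^{-1}(\overline{\mathbb{S}^2\setminus\mathcal{D}})$, and let $V_{\mathrm{short}}$ denote the analogous Lefschetz fibration over $\mathcal{D}$ whose three critical values come from $t_{c_1}, t_{c_2}, t_{c_3}$. Because the star relation $(t_{a_0}t_{a_1}t_{a_2}t_{a_3})^3=t_{c_1}t_{c_2}t_{c_3}$ holds in $\Gamma_g^3$, the boundaries $\partial V_{\mathrm{long}}$ and $\partial V_{\mathrm{short}}$ are diffeomorphic as oriented $\Sigma_g$-bundles over $\mathbb{S}^1$, so $Y_\rho = X\cup V_{\mathrm{long}}$ and $Y_{\rho'} = X\cup V_{\mathrm{short}}$ along the same closed $3$-manifold. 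Novikov additivity then reduces matters to the local comparison
\[
\sigma(Y_{\rho'})-\sigma(Y_\rho) \;=\; \sigma(V_{\mathrm{short}})-\sigma(V_{\mathrm{long}}),
\]
a quantity independent of the ambient positive relator $\rho$.

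It remains to evaluate this local signature defect. Since only the subsurface of the fiber meeting the vanishing cycles contributes, the calculation further reduces to the Lefschetz fibrations over $\mathcal{D}$ with fiber $\Sigma_1^3$ (the surface supporting the star relation). I would carry this out by writing both pieces as explicit handle decompositions: $V_{\mathrm{short}}$ is a fiber neighborhood with three $2$-handles attached along $c_1, c_2, c_3$, while $V_{\mathrm{long}}$ is built from twelve Lefschetz $2$-handles attached along the vanishing cycles of the star word; the two intersection forms may then be written down and compared. As a sanity check one can close both pieces up against a common cap of zero signature and appeal to Example~\ref{Ex13}: for $g=1$, the closed fibration with monodromy $(t_{a_0}t_{a_1}t_{a_2}t_{a_3})^3=1$ has total space $\CP\#9\CPb$ of signature $-8$, which together with the analogous short closure forces the local defect to equal $+5$. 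The main obstacle is precisely this explicit signature calculation on the twelve-dimensional intersection lattice of $V_{\mathrm{long}}$; once that number is pinned down, Novikov additivity promotes it to the global statement $\sigma(Y_{\rho'})=\sigma(Y_\rho)+5$.
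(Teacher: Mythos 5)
Your Euler characteristic argument is correct and is exactly the paper's: the substitution replaces the twelve Dehn twists of $(t_{a_0}t_{a_1}t_{a_2}t_{a_3})^{3}$ by the three twists $t_{c_1}t_{c_2}t_{c_3}$, so the number of critical points drops by nine and $e(Y_{\rho'})=e(Y_\rho)-9$. The signature half, however, has a genuine gap: you correctly reduce the problem, via Novikov additivity over the decompositions $Y_\rho=X\cup V_{\mathrm{long}}$ and $Y_{\rho'}=X\cup V_{\mathrm{short}}$, to a local defect $\sigma(V_{\mathrm{short}})-\sigma(V_{\mathrm{long}})$, but you never actually produce the number $+5$. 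You defer it to an ``explicit intersection form computation'' that you identify as the main obstacle, and the one piece of evidence you offer --- closing up the $g=1$ case --- is only a consistency check in a single, degenerate instance (where the $c_i$ bound disks), and even there it silently assumes an identification of the closed-up short piece. It does not establish the defect for general $g$, nor for the non-degenerate star configuration in which $c_1,c_2,c_3$ are essential curves. The paper closes precisely this hole by citing Proposition 3.13 of Endo--Nagami \cite{EN}, where the signature of the star relator is computed (via Meyer's signature cocycle) to be $+5$; that computation, or an equivalent one, is the actual content of the lemma and cannot be omitted.

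A secondary weak point is your claim that ``only the subsurface of the fiber meeting the vanishing cycles contributes,'' reducing the local computation to fibrations with fiber $\Sigma_1^3$. Cutting $V_{\mathrm{long}}$ along $(\text{fiber complement})\times\mathcal{D}$ is a gluing along a $3$-manifold with boundary, where Novikov additivity fails without Wall's non-additivity correction, so this reduction needs justification (it is exactly what the Meyer-cocycle formalism of \cite{EN} is designed to handle, since the cocycle sees only the symplectic representation and hence makes the defect manifestly independent of the ambient relator). As written, your proposal is a plausible outline of how one might reprove the Endo--Nagami input, not a proof of the lemma.
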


\begin{proof} Using the number of singular fibers of $f_{\rho}: Y_\rho \rightarrow \mathbb{S}^2$ and $f_{\rho'}: Y_{\rho'} \rightarrow \mathbb{S}^2$, we can compute the Euler characteristic of $Y_\rho$ and $Y_{\rho'}$. Since their number of singular fibers differ by $9$, we obtain the formula for Euler characteristics. The formula for signature follows from Proposition 3.13 given in \cite{EN}, where it was shown that the signature of a star relation is $+5$. \end{proof}

\begin{lem} Let $\rho$ and $\rho'$ be positive relators in $\Gamma_g$ and let $f_{\rho}: Y_\rho \rightarrow \mathbb{S}^2$ and $f_{\rho'}: Y_{\rho'} \rightarrow \mathbb{S}^2$ be the corresponding Lefschetz fibrations, respectively. Suppose that the relator $\rho'$ is obtained by applying a GSR-substitution of type $g$ to the relator $\rho$. Then Euler characteristic and the signature of a Lefschetz fibration $f_{\rho'}: Y_{\rho} \rightarrow \mathbb{S}^2$ with monodromy $\rho$ are computed as follows: $e(Y_{\rho'})=e(Y_{\rho})-4g^{2}-5g$, $\sigma(Y_{\rho'})=\sigma(Y_{\rho})+2g^{2} + 3g$
\end{lem}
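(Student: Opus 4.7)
The plan is to compute the Euler characteristic by counting singular fibers, and to compute the signature via Novikov additivity, reducing the general formula to a single convenient example. For the Euler characteristic the count is immediate: the GSR subword $(t_{a_0}\cdots t_{a_{2g+1}})^{2g+1}$ consists of $(2g+2)(2g+1)=4g^2+6g+2$ right-handed Dehn twists, while its replacement $t_{b_1}t_{b_2}^{g}t_{b_3}$ consists of only $g+2$, so the number of singular fibers drops by $4g^2+5g$. Using the standard formula $e(Y)=4(1-g)+\#\{\text{singular fibers}\}$ for a genus $g$ Lefschetz fibration over $\mathbb{S}^2$, this immediately gives $e(Y_{\rho'})=e(Y_{\rho})-(4g^2+5g)$.

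For the signature, write $\rho=U\cdot W_1\cdot V$ and $\rho'=U\cdot W_2\cdot V$ with $W_1=(t_{a_0}\cdots t_{a_{2g+1}})^{2g+1}$ and $W_2=t_{b_1}t_{b_2}^{g}t_{b_3}$, and decompose the total spaces as $Y_{\rho}=K\cup M_1$ and $Y_{\rho'}=K\cup M_2$. Here $K$ is the Lefschetz fibration piece over a disk whose monodromy comes from the common subwords $U$ and $V$, and $M_i$ is the Lefschetz fibration piece over a disk with monodromy $W_i$. Since the GSR relation gives $W_1=W_2$ in the mapping class group, the pieces $M_1$ and $M_2$ share the same oriented boundary, namely the $\Sigma_g$-bundle over $\mathbb{S}^1$ with monodromy $W_1=W_2$, so Novikov additivity of the signature yields
\[\sigma(Y_{\rho'})-\sigma(Y_{\rho})=\sigma(M_2)-\sigma(M_1),\]
which is a universal quantity depending only on the GSR relation and not on the ambient relator.

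To evaluate this universal constant, I apply the substitution to the single test relator $\rho_0=(t_{a_0}\cdots t_{a_{2g+1}})^{2g+1}$ regarded as a positive relator in $\Gamma_g$. By Example \ref{Ex13} (and Theorem \ref{sign}), $Y_{\rho_0}$ is the fibration $C(g)$ with $\sigma(C(g))=-2(g+1)^2=-2g^2-4g-2$. In this degenerate embedding the subsurface $\Sigma_g^3$ fills all of $\Sigma_g$, so the boundary curves $b_1,b_2,b_3$ each bound a disk in $\Sigma_g$ and every $t_{b_i}$ is trivial in $\Gamma_g$; the substituted total space $Y_{\rho_0'}$ therefore carries the identity global monodromy together with $g+2$ null-homotopic vanishing cycles, each contributing an exceptional $(-1)$-sphere. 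Blowing down these spheres recovers the trivial fibration $\Sigma_g\times\mathbb{S}^2$, so $Y_{\rho_0'}\cong \Sigma_g\times\mathbb{S}^2\,\#\,(g+2)\,\CPb$, whose signature is $-(g+2)$. Hence the universal constant equals $-(g+2)-(-2(g+1)^2)=2g^2+3g$, yielding the claimed formula.

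The main obstacle is the locality step underlying the Novikov additivity argument: one must verify carefully that $Y_{\rho}$ and $Y_{\rho'}$ really admit matching handle decompositions in which the pieces $M_1, M_2$ share an oriented boundary independent of $U$ and $V$, so that $\sigma(M_2)-\sigma(M_1)$ depends only on the GSR configuration and not on the factorization in which it is embedded. Once this locality is in place, the explicit degenerate computation delivers the signature change.
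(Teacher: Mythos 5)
Your Euler characteristic argument is the same as the paper's: both simply count the drop in the number of singular fibers, $(2g+2)(2g+1)-(g+2)=4g^{2}+5g$. For the signature you take a genuinely different route. The paper disposes of it in one line by citing Propositions 3.12 and 3.13 of Endo--Nagami \cite{EN}, whose Meyer-cocycle formalism packages both the locality of the signature under a monodromy substitution and the numerical contributions $-\frac{g+1}{2g+1}(2g+2)(2g+1)$ and $-(g+2)$ of the two sides of the relator. You instead re-derive the locality by hand, splitting $\mathbb{S}^2$ into two disks that separate the substituted subword from $U$ and $V$ and applying Novikov additivity to $Y_{\rho}=K\cup M_1$, $Y_{\rho'}=K\cup M_2$, and then you evaluate the universal difference $\sigma(M_2)-\sigma(M_1)$ on the single test relator $\rho_0=(t_{a_0}\cdots t_{a_{2g+1}})^{2g+1}$, using Example~\ref{Ex13} (equivalently Theorem~\ref{sign}) for $\sigma(Y_{\rho_0})=-2(g+1)^2$ and the identification of $Y_{\rho_0'}$ with a $(g+2)$-fold blow-up of a $\Sigma_g$-bundle over $\mathbb{S}^2$ for $\sigma(Y_{\rho_0'})=-(g+2)$; the arithmetic $-(g+2)+2(g+1)^2=2g^2+3g$ checks out. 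This buys a self-contained proof that does not rely on the Meyer-cocycle computations of \cite{EN}, at the cost of the locality verification you flag at the end. That step does close: since the fiber genus equals the GSR type, the complement of the embedded $\Sigma_g^3$ in the fiber has Euler characteristic $3$ and three boundary circles, hence is a union of three disks, so the curve configuration (and therefore each $M_i$) is unique up to a diffeomorphism of the fiber and independent of $U$ and $V$; this is also exactly what forces the $b_i$ to be null-homotopic in your test case. Two small caveats worth recording: the test-case total space $Y_{\rho_0'}$ violates the relative minimality clause of Definition~\ref{LF}, so it should be treated simply as the $4$-manifold built from the handle description rather than as a Lefschetz fibration in the paper's sense; and the signature of the blown-down piece is $0$ for any $\Sigma_g$-bundle over $\mathbb{S}^2$, so nothing depends on whether that bundle is actually trivial.
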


\begin{proof} Using the number of singular fibers of $f_{\rho}: Y_\rho \rightarrow \mathbb{S}^2$ and $f_{\rho'}: Y_{\rho} \rightarrow \mathbb{S}^2$, we see that the difference between them is $(2g+2)(2g+1) - (g+2) = 4g^2 +5g$. Consequently, we have the first formula. The formula for the signature follows from the Propositions 3.12 and 3.13 given in \cite{EN}. Notice that when we apply the generalized star relation of type $g$, the left hand side of the equation contributes $-\frac{g+1}{2g+1}(2g+2)(2g+1)$ and the right hand side contributes $(g+2)$ to the signature. Thus, the star substitution contributes $(g+1)(2g+2) - (g+2)$ = $2g^2 + 3g$ to the signature. This verifies the second formula. \end{proof}

\subsection{Some familes of hyperelliptic genus $g$ Lefschetz fibrations}\label{families} In what follows, we introduce some familes of hyperelliptic Lefschetz fibrations, which will serve as the starting building blocks in our construction of new Lefschetz fibrations. More precisely, we will be applying the (generalized) star substitutions relations to these Lefschetz fibrations to construct new Lefschetz fibrations. Let $c_1$, $c_2$, $\cdots$ , $c_{2g}$, $c_{2g+1}$ denote the collection of simple closed curves given in Figure~\ref{fig:hyper1}, and $c_{i}$ denote the right handed Dehn twists $t_{c_i}$ along the curve $c_i$. It is well-known that the following relations hold in the mapping class group $\Gamma_g$:  

\begin{equation}
\begin{array}{l}
A(g) = (c_1c_2 \cdots c_{2g-1}c_{2g}{c_{2g+1}}^2c_{2g}c_{2g-1} \cdots c_2c_1)^2 = 1,  \\
B(g) = (c_1c_2 \cdots c_{2g}c_{2g+1})^{2g+2} = 1,  \\ 
C(g) = ({c_1}^{2}c_2 \cdots c_{2g})^{2g+1} = 1, \\
D(g) = (c_1c_2 \cdots c_{2g-1}c_{2g})^{2(2g+1)} = 1. 
\end{array}
\end{equation}
  
\begin{figure}[ht]
\begin{center}
\includegraphics[scale=.35]{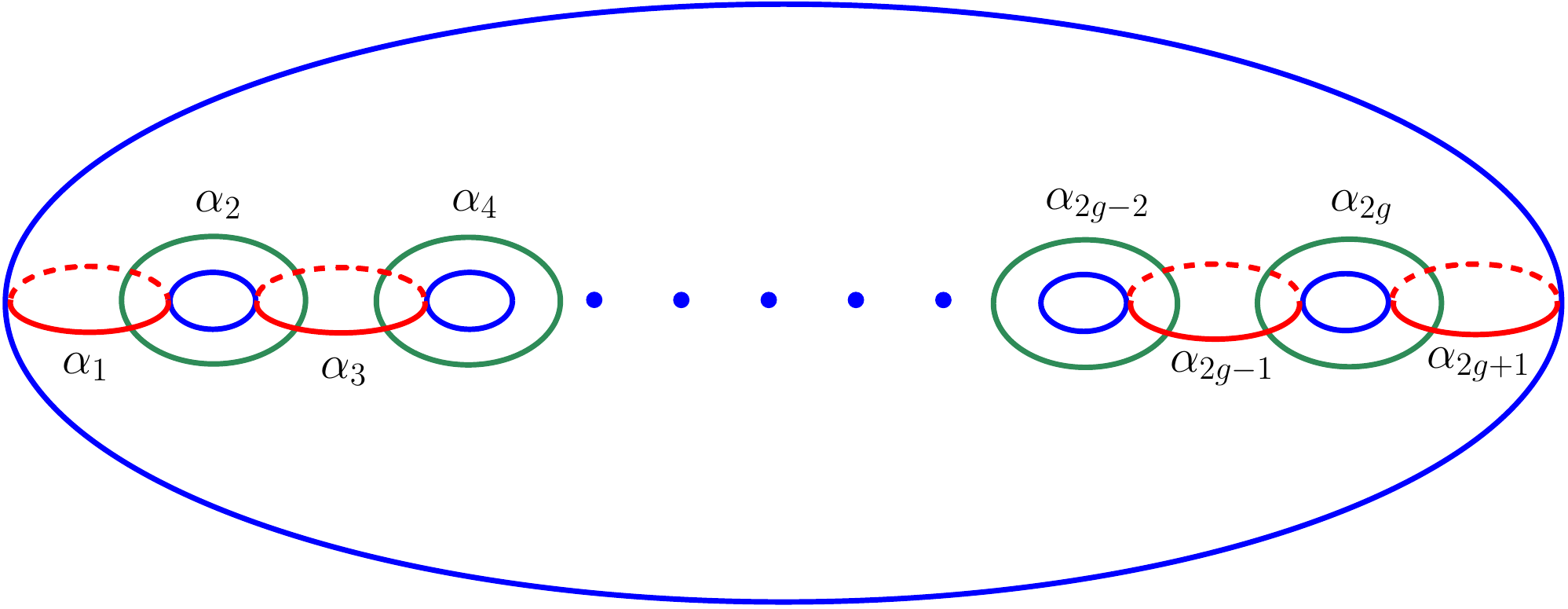}
\caption{Vanishing Cycles of the Genus $g$ Lefschetz Fibration on $X(g)$, $Y(g)$, and $Z(g)$}
\label{fig:hyper1}
\end{center}
\end{figure}

It is known that the Lefschetz fibration defined by the words $B(g)=1$ and $C(g)=1$ have the same total spaces, even though the local monodromies differ from each other. Taking this into consideration, let $X(g)$, $Y(g)$ and $Z(g)$ denote the total spaces of the above genus $g$ hyperelliptic Lefschetz fibrations given by the monodromies $A(g) = 1$, $B(g) = C(g) = 1$, and $D(g) = 1$ respectively, in the mapping class group $\Gamma_g$. 
Notice that when $g=1$, all four Lefschetz fibrations are isomorphic and have the total space $\CP\#9\CPb$. Their monodromies can be converted to each other using the braid relations $c_{1}c_{2}c_{1} = c_{1}c_{2}c_{1}$ in the mapping class group $\Gamma_{1}$. For the monodromy relation $A(g) = 1$, the corresponding genus $g$ Lefschetz fibrations over $\mathbb{S}^2$ has total space $X(g) = \CP\#(4g+5)\CPb$, the complex projective plane blown up at $4g+5$ points. For the relations $B(g) = C(g) = 1$ and $D(g) = 1$, the total spaces of the corresponding genus $g$ Lefschetz fibrations over $\mathbb{S}^2$ are also well-known families of complex surfaces. For example, $Y(2) = K3\#2\CPb$ and $Z(2)$ = \emph{Horikawa surface}, respectively. 

Let us also briefly discuss the branched-cover description of the complex surfaces $Y(g)$ and $Z(g)$. We will use these descriptions in the proofs of our main results. 

The description of $X(g)$ as branched-cover is well-known and can be found in \cite{GS} (see Remark 7.3.5, p.257).      

\begin{lem}\label{E} The genus $g$ Lefschetz fibration on $Y(g)$ over $\mathbb{S}^2$ with the monodromy $(c_1c_2 \cdots c_{2g+1})^{2g+2} = 1$ can be obtained as the double branched covering of $\CP\#\CPb$ branched along a smooth algebraic curve $B$ in the linear system $|2(g+1)\tilde{L}|$, where $\tilde{L}$ is the proper transform of line $L$ in $\CP$ avoiding the blown-up point. Furthermore, this Lefschetz fibration admits two disjoint $-1$ sphere sections.   
\end{lem}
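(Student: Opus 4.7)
\medskip

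The plan is to realize the genus $g$ Lefschetz fibration on $Y(g)$ by an explicit branched cover construction and then match the resulting monodromy with $(c_1c_2\cdots c_{2g+1})^{2g+2}=1$. First I identify $\CP\#\CPb$ with the Hirzebruch surface $F_1$ by taking the pencil of lines through the blown-up point $p$; the proper transform of a line through $p$ becomes the fiber $F$ of the ruling $F_1\to\CPS$, with $F^2=0$, and the proper transform $\tilde L$ of a line missing $p$ is a section with $\tilde L^2=1$ satisfying $\tilde L = F + E$ in cohomology. From this, $\tilde L\cdot F = 1$ and $E\cdot\tilde L = 0$.

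Next I take a smooth curve $B\in |2(g+1)\tilde L|$ and consider the double cover $\pi:Y\to \CP\#\CPb$ branched along $B$. Composing $\pi$ with the ruling yields a map $Y\to\CPS$. Since $B\cdot F = 2(g+1)$, a generic fiber is a double cover of $\CPS$ branched in $2g+2$ points, hence a Riemann surface of genus $g$ by Riemann-Hurwitz. Choosing $B$ generic in its linear system forces the only singularities of $Y\to\CPS$ to come from simple tangencies of $B$ with fibers of the ruling, which produce nodal singular fibers and hence a genuine genus $g$ Lefschetz fibration; at each such fiber the vanishing cycle is the simple closed curve in the Riemann sphere encircling the two colliding branch points, which is the standard hyperelliptic picture.

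To match the word I count singular fibers. Using adjunction in $\CP\#\CPb$ with $K = -3F - 2E$, I compute $g(B) = 1 + (g+1)(2g-1)$, and Riemann-Hurwitz for the degree-$2(g+1)$ projection $B\to\CPS$ then gives exactly $2(g+1)(2g+1) = (2g+2)(2g+1)$ tangent fibers, matching the length of the positive relator $(c_1\cdots c_{2g+1})^{2g+2}$. Since the covering involution is a globally defined hyperelliptic involution on each fiber, the monodromy lies in the hyperelliptic mapping class group $H_g$, and the standard correspondence (as in Matsumoto's branched-cover description of hyperelliptic fibrations) identifies this monodromy with $(c_1c_2\cdots c_{2g+1})^{2g+2}=1$; this is the step I expect to require the most care, as it rests on arranging the branch points in each fiber in a standard configuration and tracing the half-twist monodromy of the branch points downstairs to Dehn twists about the $c_i$ upstairs.

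Finally, for the sections I use $E\cdot B = 2(g+1)E\cdot\tilde L = 0$, so $B$ misses $E$ generically and $\pi^{-1}(E)$ splits as two disjoint copies of $\CPS$, each projecting isomorphically to $E$ and hence a section of $Y\to\CPS$. From $\pi^*(E)\cdot\pi^*(E) = 2E^2 = -2$ together with the disjointness and the symmetry of the two components, each component has self-intersection $-1$, producing two disjoint $-1$ sphere sections as asserted.
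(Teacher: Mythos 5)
Your argument is correct and is essentially the expected one: the paper itself gives no proof of this lemma but defers to \cite{AMon1}, Lemma 10, and what you have written is the standard branched-cover argument one finds there. Your numerology all checks out: $\tilde L=F+E=H$, $B\cdot F=2g+2$ gives genus $g$ fibers, adjunction gives $g(B)=1+(g+1)(2g-1)$, Riemann--Hurwitz gives $(2g+2)(2g+1)$ simple tangencies matching the length of the relator, and the splitting of $\pi^{-1}(E)$ into two spheres of square $-1$ (swapped by the deck involution, so each has square $\frac{1}{2}(2E^2)=-1$) gives the two disjoint sections. The one step you correctly flag as delicate --- identifying the global monodromy with $(c_1\cdots c_{2g+1})^{2g+2}$ rather than merely matching the number of nodal fibers --- needs one further standard input: the braid monodromy factorization of a generic member of $|2(g+1)H|$ with respect to the ruling of $\CP\#\CPb$ is Hurwitz-equivalent to the full-twist factorization $\Delta^2=(\sigma_1\cdots\sigma_{2g+1})^{2g+2}$ in the braid group $B_{2g+2}$ (Moishezon), whose termwise lift under the hyperelliptic double cover sends each half-twist $\sigma_i$ to the Dehn twist $t_{c_i}$; with that citation your sketch is a complete proof. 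An alternative, purely mapping-class-group proof of the last assertion is to note that the chain relation lifts the relator to $(c_1\cdots c_{2g+1})^{2g+2}=t_{\delta_1}t_{\delta_2}$ in $\Gamma_g^2$, which by the section criterion recalled in the paper yields the two disjoint $-1$-sections directly.
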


For the proof of the above lemma we refer (\cite{AMon1}, Lemma 10). The proof of the following lemma can be extracted from \cite{GS} [Ex 7.3.27, page 268]. 

\begin{lem}\label{E1} The double branched cover $W(g)$ of $\CP$ along a smooth algebraic curve $B$ in the linear system $|2(g+1)\tilde{L}|$ can be decomposed as the fiber sum of two copies of $\CP\#(g+1)^2\CPb$ along a a complex curve of genus equal $g(g-1)/2$. Moreover, $W(g)$ admits a genus $g$ Lefschetz pencil with two base points, and $Y(g) =  W(g)\#2\CPb$. 
\end{lem}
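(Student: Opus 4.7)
The lemma packages three claims: the blowup identity $Y(g)=W(g)\#2\CPb$, the existence of a genus-$g$ Lefschetz pencil on $W(g)$ with two base points, and the fiber-sum decomposition. The first two follow quickly from Lemma~\ref{E}, and I would dispatch them first; the fiber-sum decomposition is the main content and requires a degeneration of the branch curve together with a Stein-factorization analysis of the resulting double cover.

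\medskip

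For the blowup relation and the pencil: by Lemma~\ref{E}, $Y(g)$ is realized as the double cover of $\CP\#\CPb$ branched along a smooth curve $\widetilde{B}\in|2(g+1)\tilde{L}|$ disjoint from the exceptional divisor $E$. Because the cover is \'etale over a neighborhood of $E$, the preimage $\pi^{-1}(E)$ consists of two disjoint $(-1)$-spheres in $Y(g)$; blowing them down produces the double cover $W(g)\to\CP$, which is the content of $Y(g)=W(g)\#2\CPb$. The pencil of lines in $\CP$ through the blown-up point $p\in\CP\setminus B$ becomes a $\CPS$-bundle on $\CP\#\CPb$, and its pullback through $\pi$ is precisely the Lefschetz fibration $Y(g)\to\CPS$ of Lemma~\ref{E} (a generic fiber, being the double cover of a line $\ell$ branched at the $2(g+1)$ points of $\ell\cap B$, has genus $g$ by Riemann--Hurwitz). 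Blowing down the two $(-1)$-sections converts this Lefschetz fibration into a genus-$g$ Lefschetz pencil on $W(g)$ whose two base points are the preimages $\pi^{-1}(p)$.

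\medskip

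For the fiber-sum decomposition, the plan is to degenerate $B$ within $|2(g+1)L|$ to a reducible divisor $B_0=B_1+B_2$, with $B_1,B_2\in|(g+1)L|$ smooth and transverse, meeting at $(g+1)^2$ points $\{p_1,\ldots,p_{(g+1)^2}\}=B_1\cap B_2$. Blowing these up in $\CP$ yields $\widetilde{X}=\CP\#(g+1)^2\CPb$, on which the proper transforms $\widetilde{B}_1,\widetilde{B}_2$ are disjoint smooth curves of genus $g(g-1)/2$ (by the plane-curve genus formula) and self-intersection zero. The pencil they span equips $\widetilde{X}$ with a holomorphic genus-$g(g-1)/2$ fibration $f\colon\widetilde{X}\to\CPS$ of which $\widetilde{B}_1,\widetilde{B}_2$ are two smooth fibers. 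Because $\widetilde{B}_1+\widetilde{B}_2$ is $2$-divisible in $\mathrm{Pic}(\widetilde{X})$, one forms the smooth double cover $X'\to\widetilde{X}$ branched along it. The Stein factorization of $X'\to\widetilde{X}\xrightarrow{f}\CPS$ takes the form $X'\to\mathcal{C}\to\CPS$, where $\mathcal{C}\to\CPS$ is the double cover branched at the two base values of $\widetilde{B}_1,\widetilde{B}_2$ (hence $\mathcal{C}\cong\CPS$) and $X'\to\mathcal{C}$ is a genus-$g(g-1)/2$ Lefschetz fibration whose monodromy factorization is the pullback of $f$'s, i.e.\ two concatenated copies of the monodromy word of $f$. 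Kas's theorem then identifies $X'$ with the symplectic fiber sum $\widetilde{X}\#_F\widetilde{X}$ along a smooth fiber of $f$, since the latter carries the same monodromy factorization.

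\medskip

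To finish, I would identify $X'$ with $W(g)$ by viewing the above as the central fiber of a one-parameter family of branch divisors $B_t\in|2(g+1)L|$ with $B_0=B_1+B_2$ and $B_t$ smooth for $t\neq 0$; in the associated family of double covers, $W(B_t)\cong W(g)$ for $t\neq 0$, while $W(B_0)$ acquires an $A_1$-singularity above each $p_j$. In complex dimension two the smoothing of an $A_1$-singularity is diffeomorphic to its minimal resolution (both replace the node by a disk bundle of Euler number $-2$ over $S^2$), so $W(g)\cong X'$, completing the proof. The main obstacle is the Stein-factorization/Kas step matching the pullback monodromy with the fiber-sum monodromy; the smoothing-versus-resolution comparison at $A_1$-singularities is classical and parallels the analysis in \cite{GS}, Example 7.3.27.
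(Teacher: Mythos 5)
Your proof is correct and follows essentially the argument the paper intends: the paper gives no written proof of this lemma, only the pointer to \cite{GS} (Example 7.3.27), and your degeneration of $B$ to $B_1+B_2$, blow-up of the $(g+1)^2$ nodes, the double cover with Stein factorization, and the smoothing-equals-resolution comparison at the $A_1$-points is precisely the standard argument to be extracted from that reference. The blow-down identity $Y(g)=W(g)\#2\CPb$ and the pencil with two base points are likewise obtained, as the paper does, directly from Lemma~\ref{E}.
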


Let $m$ be any nonnegative integer, and $\mathbb{F}_{m}$ denote $m$-th Hirzebruch surface. $\mathbb{F}_{m}$ admits the structure of holomorphic $\CPS$ bundle over $\CPS$ with the holomorphic sections $\Delta_{+m}$ and $\Delta_{-m}$ with $\Delta_{\pm m} = \pm m$.    

\begin{lem}\label{E3} The genus $g$ Lefschetz fibration on $Z(g)$ over $\mathbb{S}^2$ with the monodromy $(c_1c_2 \cdots c_{2g})^{2(2g+1)} = 1$ can be obtained as the $2$-fold cover of $\mathbb{F}_{2}$ branched over the disjoint union of a smooth curve $C$ in the linear system $|(2g+1)\Delta_{+2}|$ and $\Delta_{-2}$  
\end{lem}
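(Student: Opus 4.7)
The plan is to build the branched cover carefully, verify that it inherits a genus $g$ Lefschetz fibration from the ruling of $\mathbb{F}_2$, and then identify it with $Z(g)$ by matching the monodromy data. I would break the argument into the following steps.

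First, I would verify that the prescribed double cover exists and is smooth. Recall that in $\mathrm{Pic}(\mathbb{F}_2)$ we have $\Delta_{+2}=\Delta_{-2}+2F$, so
\[
C+\Delta_{-2} \;=\; (2g+1)\Delta_{+2}+\Delta_{-2} \;=\; 2\bigl((g+1)\Delta_{-2}+(2g+1)F\bigr),
\]
which is divisible by $2$ in the Picard group, so the branched double cover is well-defined. Since $\Delta_{+2}\cdot\Delta_{-2}=0$, we may choose $C$ in its linear system so as to be smooth and disjoint from $\Delta_{-2}$; then the branch locus $C\cup\Delta_{-2}$ is a smooth divisor and the resulting double cover $X\to\mathbb{F}_2$ is a smooth complex surface.

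Next, I would exploit the ruling $\pi:\mathbb{F}_2\to\mathbb{CP}^1$. Each fiber $F$ is a $\mathbb{CP}^1$ intersecting the branch locus in $C\cdot F+\Delta_{-2}\cdot F=(2g+1)+1=2g+2$ points. Hence the generic fiber of the composition $X\to\mathbb{F}_2\to\mathbb{CP}^1$ is a double cover of $\mathbb{CP}^1$ branched at $2g+2$ points, which by Riemann--Hurwitz is a smooth curve of genus $g$. The singular fibers occur precisely over the critical values of the restriction $C\to\mathbb{CP}^1$ (since $\Delta_{-2}$ is a section and is always transverse to the ruling). At a simple tangency of $F$ with $C$, two branch points coalesce, producing a nodal genus $g$ fiber, i.e.\ a Lefschetz-type singular fiber whose vanishing cycle is the preimage of a small arc joining the two coalescing branch points; this vanishing cycle is non-separating. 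Since the hyperelliptic involution of $X$ over each fiber is global, the resulting fibration is hyperelliptic.

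Now I would count: using adjunction on $\mathbb{F}_2$ with $K_{\mathbb{F}_2}=-2\Delta_{+2}$, one computes $g(C)=4g^2$, and applying Riemann--Hurwitz to the degree $2g+1$ map $C\to\mathbb{CP}^1$ gives $4g(2g+1)$ simple critical points. Thus $X$ carries a hyperelliptic genus $g$ Lefschetz fibration over $\mathbb{S}^2$ with $4g(2g+1)=2g\cdot 2(2g+1)$ non-separating singular fibers, which is exactly the number of Dehn twists in the word $(c_1c_2\cdots c_{2g})^{2(2g+1)}$.

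Finally, I would identify this fibration with $Z(g)$. The main obstacle is to rigorously identify the monodromy rather than merely count singular fibers. The cleanest route is to observe that the vanishing cycles obtained above are the lifts of the $2g+2$ branch points paired consecutively, and labeling the branch points $1,2,\dots,2g+2$ on a real slice, the coalescences produced by the map $C\to\mathbb{CP}^1$ yield the standard chain $c_1,c_2,\ldots,c_{2g}$ as vanishing cycles in cyclic order; this is the same model used in \cite{GS} for branched covers of Hirzebruch surfaces along horizontal divisors, and it realizes the relation $D(g)=(c_1\cdots c_{2g})^{2(2g+1)}=1$. As a cross-check (useful to close the argument if one prefers to avoid tracking the combinatorics of coalescences explicitly), one may compute $e(X)$ and $\sigma(X)$ via the double cover formulas applied to $C\cup\Delta_{-2}\subset\mathbb{F}_2$ and verify that they agree with $e(Z(g))$ and the hyperelliptic signature of $(c_1\cdots c_{2g})^{2(2g+1)}$ given by Theorem~\ref{sign}, which, together with the hyperelliptic structure and the matching count of non-separating vanishing cycles, pins down the diffeomorphism type and identifies the fibration with the one on $Z(g)$.
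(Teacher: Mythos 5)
Your proposal follows essentially the same route as the paper: compose the branched double cover with the ruling of $\mathbb{F}_2$, observe that the generic fiber is a double cover of a sphere fiber branched at $2g+2$ points and hence has genus $g$, and read off the monodromy from the branch curve $C\cup\Delta_{-2}$. Your write-up is in fact more detailed than the paper's (which does not check divisibility of the branch class, compute $g(C)$, or count the $4g(2g+1)$ critical points), and your computations are correct.
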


\begin{proof} The Lefschetz fibration on $Z(g) \rightarrow  \CPS$ obtained by composing the branched cover map $Z(g) \rightarrow \mathbb{F}_{2}$ with the bundle map $\mathbb{F}_{2} \rightarrow \CPS$. Since a generic fiber is the double cover of a sphere fiber of $\mathbb{F}_{2}$ branched over $2g + 2$ points, it is a genus $g$ surface. The monodromy of this Lefschetz fibration can be computed from the braid monodromy of the branch curve $C \cup \Delta_{-2}$. This fibration admits a holomorphic sphere section $S$ with $S^2 = -1$, which is obtained by lifting $\Delta_{-2}$ to $Z(g)$.
\end{proof}

\section{Symplectic Surgeries Corresponding to the Generalized Star Relations}\label{GSR}

\subsection{Motivation and some preliminaries} Our generalized star relation (GSR) surgery operation is motivated largely by the rational blowdown construction given in \cite{FS1}. Let us recall that the rational blowdown construction replaces a tubular neighborhood of a certain configuration of negative spheres, denoted by $C_p$, by a rational-homology $4$-ball $\mathbb{B}_{p}$. It it has been used to construct many interesting smooth and symplectic $4$-manifolds. Rational blow-downs operation was shown to be symplectic by Symington \cite{Sym1, Sym2}. Symington showed that both the negative definite plumbing $C_{p}$ and the rational homology ball $\mathbb{B}_{p}$ support symplectic structures with convex boundary inducing the same contact structures. One can try to use the same approach to define other symplectic cut-and-paste operation, by replacing a neighborhood of negative surfaces in symplectic $4$-manifolds. Another motivation for us comes from the paper \cite{EPS}, where the resolution of certain types of singularities were studied using algebraic geometric considerations. The resolution of triple point singularities in \cite{EPS} corresponds to the star surgery operation (i.e., a special case of GSR surgery for $g=1$) discussed in our paper. We plan to study the connection between algebro-geometric and symplectic versions of these operations, understand and utilize some of the examples coming from algebraic geometry in our future work.

Let us now introduce some definitions and results that will be useful in the sequel. The reader is referred to \cite{Sym1, Sym2, Sym3, gompf} for the background and more details.

\begin{defn} Let $\mathcal{C} = C_1 \cup \cdots \cup C_{n} \subset (X, \omega)$ be a collection of immersed symplectic surfaces in a symplectic $4$-manifold $(X,\omega)$ intersecting each other $\omega$-orthogonally. Let $\widehat{X}$ be the symplectic $4$-manifold with boundary associated to $X \setminus \mathcal{C}$ . A closed symplectic manifold $(\tilde{X}, \tilde{\omega})$ is called \emph{a generalized symplectic connected sum} of $X$ along $\mathcal{C}$ if there exists a symplectic embedding $\psi: X\setminus \mathcal{C} \rightarrow \tilde{X}$ which extends to a surjective symplectic immersion $\widehat{\psi}: \widehat{X} \rightarrow \tilde{X}$.
\end{defn}

If the above collection of surfaces consist of a pair disjoint symplectomorphic surfaces, then the generalized symplectic connected sum is just a symplectic connected sum that was studied in \cite{gompf}. In \cite{Sym2, Sym3}, Symington, and McDuff and Symington, provided criteria for constructing certain generalized connect normal sums, called $3$-fold and $4$-fold sums respectively. Let us recall one result of \cite{Sym2, Sym3} which we will use.

\begin{thm}\label{3-fold sum} Let $(M, \omega)$ be a symplecitc $4$-manifold. Let $\mathcal{C} = {\{S_{i}; T_{i} \}}_{i=1}^{3}$ be a collection of symplectic surfaces in $M$ such that $S_i$ and $T_i$ are disjoint from both $S_j$ and $T_j$ for $i\neq j$, and $S_i$ intersects $T_i$ $\omega$-orthogonally once. Assume that $[T_{i}]^2 + [S_{i+1}]^2 = -1$ for each $i$, and that $T_i$ is symplectomorphic to $S_{i+1}$. The result of identifying a punctured neighbourhood of $T_i$ with a punctured neighbourhood of $S_{i+1}$ is a generalized symplectic sum, which is called a $3$-fold sum.
\end{thm}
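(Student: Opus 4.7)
The plan is to reduce the statement to a local model near each intersection point $p_i = S_i \cap T_i$, match contact boundaries, and then apply Symington's symplectic gluing. Because the three pairs $\{S_i,T_i\}$ are pairwise disjoint from $\{S_j,T_j\}$ for $i\neq j$, all three identifications can be carried out independently and simultaneously, so the essential work happens at a single interface.

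First, I would invoke Weinstein's symplectic neighbourhood theorem, in its $\omega$-orthogonal form, to identify a neighbourhood of each $p_i$ symplectomorphically with a neighbourhood of $(0,0)\in(\mathbb{C}^2,\omega_0)$, sending $S_i$ and $T_i$ to the two complex coordinate axes. Rounding and truncating the corner produces a standard $4$-ball whose boundary carries an explicit contact structure coming from the Liouville form. This local picture extends to a global tubular model in which $\nu(T_i)$ is symplectomorphic to the standard disk bundle over $T_i$ of Euler number $[T_i]^2$, and analogously for $\nu(S_{i+1})$, with both models agreeing with the corner model at the common intersection.

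The punctured neighbourhood $\widehat{\nu}(T_i)$ is $\nu(T_i)$ with a small Darboux ball around $p_i$ removed; its contact boundary splits into a circle-bundle piece over $T_i\setminus\{p_i\}$ and a standard $S^3$ piece at the puncture, and analogously for $\widehat{\nu}(S_{i+1})$. Since $T_i$ is symplectomorphic to $S_{i+1}$, the circle-bundle pieces can be identified by an orientation-reversing contactomorphism after a fibrewise rescaling, while the two $S^3$ pieces are glued by the standard model at the respective corners. The numerical hypothesis $[T_i]^2+[S_{i+1}]^2=-1$ is precisely what makes the Euler numbers match across the full boundary: in an ordinary Gompf-type symplectic sum the compatibility condition would be that this sum equals $0$, but here the removed Darboux ball contributes a compensating $+1$ to the effective Euler number across the interface, so $-1$ on the surfaces is exactly the right condition. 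With boundaries matched, the actual gluing is routine and uses Symington's convex/concave construction (the same technique used to verify that the rational blowdown is symplectic), producing a closed symplectic manifold $(\tilde X,\tilde\omega)$ together with a symplectic embedding $\psi: M\setminus\mathcal{C}\hookrightarrow\tilde X$ that extends to a surjective symplectic immersion of the associated punctured manifold, which is the required generalized symplectic connected sum.

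The main obstacle is the corner bookkeeping: one must verify carefully that the induced contact structures on $\partial\widehat{\nu}(T_i)$ and $\partial\widehat{\nu}(S_{i+1})$ actually agree after orientation reversal, and that the three rounded-corner identifications can be chosen compatibly and simultaneously. This is precisely the point at which the $3$-fold sum differs from the ordinary symplectic sum along a single disjoint pair of surfaces, and it is the technical heart of the argument; producing an explicit coordinate model for the rounded-corner region, and tracking how the $-1$ emerges from the removed Darboux ball, is the step that demands the most care.
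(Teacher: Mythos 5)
The paper does not actually prove this statement: it is recalled verbatim from Symington's work \cite{Sym2, Sym3} (``Let us recall one result of \cite{Sym2, Sym3} which we will use''), so there is no in-paper argument to measure your proposal against. Judged on its own terms, your outline does follow the right general strategy for Symington's construction --- $\omega$-orthogonal Weinstein/Darboux models at the intersection points $p_i = S_i \cap T_i$, identification of the punctured neighbourhoods of $T_i$ and $S_{i+1}$ in the style of Gompf's sum \cite{gompf}, and the observation that the three identifications are independent because the pairs are disjoint --- but as written it is an outline with the decisive step missing rather than a proof.

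The genuine gap is exactly where you locate it yourself. The sentence asserting that ``the removed Darboux ball contributes a compensating $+1$ to the effective Euler number across the interface, so $-1$ on the surfaces is exactly the right condition'' is not a verification; it is a restatement of the theorem's numerical hypothesis together with the claim that it works, and nothing in the preceding paragraphs forces that $+1$. To close this you would need the explicit coordinate model near the rounded corner showing how the normal data of $T_i$ near $p_i$ (where $S_i$ occupies the fibre direction) and of $S_{i+1}$ near $p_{i+1}$ fit together after the fibre-orientation-reversing identification, and that the total space closes up smoothly and symplectically precisely when $[T_i]^2 + [S_{i+1}]^2 = -1$. A secondary soft spot: identifying the two circle-bundle boundary pieces ``by an orientation-reversing contactomorphism after a fibrewise rescaling'' glosses over the fact that the relevant identification in a Gompf-type sum is of the \emph{annulus} bundles (neighbourhood minus zero-section) with the fibre direction reversed, not a contactomorphism of convex boundaries --- indeed the boundary of a symplectic disk bundle need not be convex here, and Symington's construction is a direct symplectic gluing rather than a contact-boundary argument. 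Since the heavy lifting is all in \cite{Sym2, Sym3}, the efficient repair is to cite the local model and gluing lemma there explicitly rather than to reprove them.
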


Let us also recall a result from \cite{GaS} to define the GSR surgery.

\begin{thm}\label{convex} If $\mathcal{C} = C_1 \cup \cdots \cup C_{n} \subset (X, \omega)$ is a collection of symplectic surfaces in a symplectic 4-manifold $(X,\omega)$ intersecting each other $\omega$-orthogonally according to the negative definite plumbing graph $G$ and $\nu \mathcal{C} \subset X$ is an open set containing $\mathcal{C}$, then $\mathcal{C}$ admits an $\omega$ convex neighborhood $U_{\mathcal{C}} \subset \nu \mathcal{C} \subset (X, \omega)$. In particular, the complement $X - int(U_{\mathcal{C}})$ is a strong concave filling of its contact boundary.
\end{thm}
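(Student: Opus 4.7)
The plan is to produce an $\omega$-strictly plurisubharmonic function $\phi$ defined on a neighborhood of $\mathcal{C}$ (relative to a suitably chosen $\omega$-tame almost complex structure $J$), whose regular level sets are transverse to the Liouville vector field $V = J\nabla_{g}\phi$, where $g(\cdot,\cdot)=\omega(\cdot,J\cdot)$. The convex neighborhood $U_{\mathcal{C}}$ will then be realized as a sublevel set $\{\phi<\varepsilon\}$ for $\varepsilon>0$ small; its contact boundary will carry $\ker(\iota_{V}\omega)|_{\partial U_{\mathcal{C}}}$, and $V$ will point outward, so that $X-\mathrm{int}(U_{\mathcal{C}})$ is automatically a strong concave filling of this contact structure.

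First, I would choose an $\omega$-tame almost complex structure $J$ on $\nu \mathcal{C}$ for which each $C_{i}$ is $J$-holomorphic and the intersections $C_{i}\cap C_{j}$ consist of $J$-holomorphic transverse double points. The $\omega$-orthogonality hypothesis, combined with the standard extension lemma for $\omega$-compatible almost complex structures along symplectic submanifolds meeting orthogonally, makes such a $J$ available. Second, on the total space of the unit disk bundle of each $C_{i}$ equipped with a Hermitian fiber metric, the squared fiber norm $\phi_{i}(v)=\|v\|^{2}$ is strictly plurisubharmonic and exhausts a neighborhood of the zero section; its regular level sets are circle bundles over $C_{i}$ furnishing the local model for the contact boundary.

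The heart of the argument is to assemble a global potential $\phi$ near $\mathcal{C}$ from the local potentials $\phi_{i}$. I would write $\phi = \sum_{i} r_{i}\, \phi_{i}$, interpolated by cutoff functions near the intersection loci. The Levi form of $\phi$ picks up a positive contribution from each $\phi_{i}$ together with cross terms dictated by the intersection pattern, and the requirement that the full Levi form be positive definite reduces to finding weights $r_{i}>0$ that satisfy an inequality encoded by the plumbing intersection matrix of $G$. This is precisely where the negative definiteness hypothesis enters crucially: by the same linear algebra underlying Grauert's criterion for contractibility in the holomorphic setting, negative definiteness of $G$ is exactly the condition guaranteeing the existence of positive weights $r_{i}$ realizing strict plurisubharmonicity.

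The main obstacle I anticipate lies in the gluing near a crossing point $p\in C_{i}\cap C_{j}$, where two local product models must be patched and cutoffs introduce mixed derivative terms whose negative contribution to the Levi form must be dominated by the positive $r_{i}\,\partial\bar{\partial}\phi_{i}$ terms. Sharpening the cutoffs and tuning the weights $r_{i}$ according to the linear system determined by $G$ handles this. Once the globally strictly plurisubharmonic $\phi$ is in place, for $\varepsilon$ sufficiently small the level set $\{\phi=\varepsilon\}\subset \nu\mathcal{C}$ is smooth and $J$-strictly pseudoconvex, yielding the $\omega$-convex neighborhood $U_{\mathcal{C}}$; by the duality between outward Liouville fields on $U_{\mathcal{C}}$ and inward Liouville fields on its complement, $X-\mathrm{int}(U_{\mathcal{C}})$ is then a strong concave filling of the induced contact boundary.
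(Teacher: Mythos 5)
You should first be aware that the paper does not prove this statement at all: it is recalled verbatim from Gay--Stipsicz \cite{GaS} as a known input to the definition of the GSR surgery. So the only meaningful comparison is with the proof in \cite{GaS}, and there your route is genuinely different. Gay and Stipsicz do not use almost complex structures or plurisubharmonic potentials; they build the Liouville vector field for $\omega$ directly, out of explicit local models (a toric/quadratic model near each $\omega$-orthogonal intersection point and a radial model along each surface), and glue these. Negative definiteness enters for them in solving a linear system $Qz=a$ governing the parameters of the model: since a plumbing intersection matrix has non-negative off-diagonal entries, negative definiteness forces $-Q^{-1}$ to have non-negative entries, so the required sign conditions on $z$ can be met. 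Your proposal is instead the Grauert-style argument (negative line bundle metrics, weighted potentials $\sum r_i\phi_i$, positivity of the twisted degrees $\sum_i r_i\,C_i\cdot C_j<0$), transplanted to an $\omega$-tame $J$. The linear algebra you invoke is the same as theirs, and that part is sound.

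The genuine gap is in the last step, where you pass from a strictly $J$-plurisubharmonic $\phi$ to an \emph{$\omega$-convex} neighborhood. The vector field $V=J\nabla_g\phi$ dual to $-d\phi\circ J$ is Liouville for the auxiliary exact form $\omega_\phi:=-d(d\phi\circ J)$, not for the ambient $\omega$; the theorem asserts the existence of a Liouville field for $\omega$ itself near $\partial U_{\mathcal C}$, and nothing in your construction arranges $\iota_V\omega$ to be a primitive of $\omega$ on a collar. Closing this requires an additional interpolation or Moser-type argument identifying $\omega$ with $\omega_\phi$ (or with a form agreeing with $\omega$ near the boundary) on a neighborhood of $\mathcal C$ rel the configuration --- precisely the step that \cite{GaS} avoid by working with $\omega$ from the start. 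Two smaller points: $\|v\|^2$ on a disk bundle is strictly plurisubharmonic only for a Hermitian metric of negative curvature, i.e.\ you must already use $C_i^2<0$ (which negative definiteness does give you) and choose the metric accordingly; and the ``sum with cutoffs'' gluing near a crossing is exactly where Grauert's integrable-case proof has real content (one takes weighted products of section norms of $\mathcal O(\sum r_iC_i)$ rather than cutoff sums), so ``sharpening the cutoffs'' should be replaced by that explicit potential if you want the Levi form estimate to actually close.
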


Let $\mathcal{C}$ be the union of symplectic surfaces given as in the statement of the above theorem with the plumbing graph $G$. By the assumption, the plumbing graph $G$ is negative definite. Let us denote by $v_j$ and $e_j < 0$ the vertex corresponding to the surface $C_{j}$ and the self-intersection of this surface, respectively. By a fundamental result of Grauert \cite{Ga}, for any such graph $G$ there is a normal surface singularity $(W, 0)$ with resolution dual graph equal to $G$. In our case, the normal surface singularities will be determined by the left hand side of the generalized star relation $(t_{a_0}t_{a_1}t_{a_2} \cdots t_{a_{2g+1}})^{2g+1} = t_{b_1} t_{b_2}^{g}t_{b_3}$.

\subsection{The generalized star relation surgery} Let us now introduce the symplectic surgery operation, which corresponds to the generalized star relation $(t_{a_0}t_{a_1}t_{a_2} \cdots t_{a_{2g+1}})^{2g+1} = t_{c_1} {t_{c_2}}^{g}t_{c_3}$. Let $S_{g}$ be the Stein $4$-manifold determined by the Lefschetz fibration $(t_{a_0}t_{a_1}t_{a_2} \cdots t_{a_{2g+1}})^{2g+1}$ in $\Gamma_g^{3}$. The fibers of the Lefschetz fibration are genus $g$ surfaces with three boundary components, and the base of the fibration is a disk. The Stein filling $S_g$ is simply connected and it has Euler characteristic $4g^{2}+5g$.

Let $\mathbb{P}_{g}$ be a symplectic filling determined by the monodromy $t_{c_1} {t_{c_2}}^{g}t_{c_3}$. In fact, $\mathbb{P}_{g}$ a plumbed smooth $4$-manifold obtained by plumbing a disk bundle over the genus $g$ surface and $(g-1)$ disk bundles over the $2$-spheres, which can be seen from the monodromy $t_{c_1} {t_{c_2}}^{g}t_{c_3}$. In particular, the above theorem applies to the plumbing $\mathbb{P}_{g}$. It is easy to see that the induced open book decomposition on the boundary of $S_{g}$ agrees with the one on the boundary of the symplectic plumbing given $\mathbb{P}_{g}$. Our GSR surgery operation will replace the Stein filling $S_{g}$ determined by the Lefschetz fibration $(t_{a_0}t_{a_1}t_{a_2} \cdots t_{a_{2g+1}})^{2g+1}$ with the symplectic filling $\mathbb{P}_{g}$ determined by the monodromy $t_{c_1} {t_{c_2}}^{g}t_{c_3}$. When $g=1$, i.e. for star relation $(t_{a_0}t_{a_1}t_{a_2} t_{a_{3}})^{3} = t_{c_1} t_{c_2}t_{c_3}$, this operation on symplectic $4$-manifold corresponds to the symplectic connected sum operation. The details are provided bellow.

Let $g \geq  1$ and $\mathbb{P}_g$ be the smooth $4$-manifold obtained by linear plumbing of one disk bundle over the genus $g$ surface and $(g-1)$ disk bundles over the $2$-spheres with Euler numbers given as follows: the first vertex $u_{0}$ of the linear diagram represents a disk bundle over genus $g$ surface with Euler number $-3$, and all other remaining vertices $u_{i}$ (for $1 \leq i \leq g-1$) represents a disk bundle over $2$-sphere with the Euler number $-2$. The boundary of $\mathbb{P}_g$ is the Seifert fibered $3$-manifold which also bounds the Stein filling $S_g$. 


Now assume that there is a symplectic embedding of a Stein filling $S_g$ into a closed symplectic $4$-manifold $X$. Then \emph{the generalized star surgery} manifold $X_g$ is obtained by replacing the Stein filling $S_g$ with the symplectic filling $\mathbb{P}_g$, i.e., $X_g = (X \setminus {S_g)} \cup \mathbb{P}_g$. We can also define a \emph{reverse generalized star surgery} operation, which replaces the symplectic filling $\mathbb{P}_g$ in a closed symplectic $4$-manifold $Y$ with the Stein filling $S_g$, i.e., $Y_g = (X \setminus \mathbb{P}_g) \cup S_g$.

\begin{lem}\label{thm:rb} $e(X_g) = e(X) - 4g^{2}-5g$, $\sigma(X_g) = \sigma(X) +2g^{2}+3g$, ${c_1}^{2}(X_g) = {c_1}^2(X) - g(2g+1)$, and $\chi(X_g) = \chi(X)- \frac{g(g+1)}{2}$.
\end{lem}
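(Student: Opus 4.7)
The plan is to exploit the additivity of Euler characteristic and signature under the local replacement $X \rightsquigarrow X_g = (X \setminus S_g) \cup_\partial \mathbb{P}_g$, in which the Stein filling $S_g$ is excised and the symplectic plumbing $\mathbb{P}_g$ is glued in along their common closed $3$-manifold boundary. Since $\chi$ vanishes on closed odd-dimensional manifolds and signatures are additive across closed $3$-manifold separators by Novikov additivity,
\[
e(X_g) - e(X) = e(\mathbb{P}_g) - e(S_g), \qquad \sigma(X_g) - \sigma(X) = \sigma(\mathbb{P}_g) - \sigma(S_g).
\]
The two remaining identities in the lemma then follow at once from the universal formulas $c_1^2 = 2e + 3\sigma$ and $\chi = (e+\sigma)/4$ valid on any symplectic (hence almost-complex) $4$-manifold, so the problem reduces to four explicit computations.

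The Euler characteristics come straight from the structural descriptions. The Lefschetz fibration presentation of $S_g$ as $\Sigma_g^3 \times D^2$ with $(2g+2)(2g+1)$ two-handles attached along the vanishing cycles gives
\[
e(S_g) = \chi(\Sigma_g^3) + (2g+2)(2g+1) = -(1+2g) + (4g^2+6g+2) = 4g^2+4g+1.
\]
For the linear plumbing $\mathbb{P}_g$, inclusion-exclusion across the $g-1$ plumbing edges (each joining two disk bundles along a $D^4$) yields
\[
e(\mathbb{P}_g) = \chi(\Sigma_g) + (g-1)\chi(\mathbb{S}^2) - (g-1) = (2-2g) + 2(g-1) - (g-1) = 1 - g,
\]
so $e(\mathbb{P}_g) - e(S_g) = -4g^2 - 5g$, as claimed.

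The signature of $\mathbb{P}_g$ equals $-g$: its $g \times g$ tridiagonal intersection form with diagonal $(-3, -2, \ldots, -2)$ and unit off-diagonals has leading principal minors $D_k$ satisfying $D_k = -2 D_{k-1} - D_{k-2}$ with $D_1 = -3$ and $D_2 = 5$, and a short induction gives $(-1)^k D_k > 0$, so the form is negative definite. The main obstacle is $\sigma(S_g)$, since $S_g$ has nonempty boundary and its intersection form can be degenerate, making a direct handle-level calculation inconvenient. To sidestep this, I would realize $S_g$ as a submanifold of the closed hyperelliptic Lefschetz fibration $C(g) \to \mathbb{S}^2$ of Example~\ref{Ex13}---obtained by restricting the fibration to a disk $D_0 \subset \mathbb{S}^2$ containing all critical values and then deleting tubular neighborhoods of the three sections $\mathcal{S}_1, \mathcal{S}_2, \mathcal{S}_3$---and decompose $C(g) = S_g \cup_\partial Q_g$ along this common closed $3$-manifold boundary. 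The complementary cap $Q_g$ deformation retracts onto the union of a regular fiber $F$ (with $F^2 = 0$) and the three sections, whose self-intersections are $-1, -g, -1$, pairwise disjoint, and each meeting $F$ transversely in one point. These four classes span $H_2(Q_g;\mathbb{Q})$, and the substitution $F' = F + \mathcal{S}_1 + g^{-1}\mathcal{S}_2 + \mathcal{S}_3$ diagonalizes the intersection form over $\mathbb{Q}$ as $\mathrm{diag}(2 + g^{-1}, -1, -g, -1)$, giving $\sigma(Q_g) = -2$. Combined with the Matsumoto-Endo identity $\sigma(C(g)) = -2(g+1)^2$ (Theorem~\ref{sign}) and Novikov additivity, this yields $\sigma(S_g) = -2(g+1)^2 + 2 = -2g^2 - 4g$, so $\sigma(X_g) - \sigma(X) = -g - (-2g^2 - 4g) = 2g^2 + 3g$. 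Substituting these two differences into the Chern-number identities produces $c_1^2(X_g) - c_1^2(X) = 2(-4g^2 - 5g) + 3(2g^2 + 3g) = -g(2g+1)$ and $\chi(X_g) - \chi(X) = (-2g^2 - 2g)/4 = -g(g+1)/2$.
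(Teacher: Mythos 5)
Your argument is correct, and it reaches the stated formulas by a genuinely more careful route than the paper's own proof, most visibly for the signature. The paper simply declares that $S_g$ and $\mathbb{P}_g$ have negative definite intersection forms of ranks $(g+1)(2g+1)$ and $g+2$ and subtracts; neither rank matches the pieces as defined ($\mathbb{P}_g$ is a plumbing on $g$ vertices, so $b_2(\mathbb{P}_g)=g$, and a handle count gives $b_2(S_g)=4g^2+4g$), and the quantities actually fed into the paper's arithmetic are $\sigma(S_g)=-2(g+1)^2$ and $\sigma(\mathbb{P}_g)=-(g+2)$, each off by $2$ from the true values $-2g^2-4g$ and $-g$, so the two errors cancel in the difference $2g^2+3g$. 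You instead compute each term honestly: $\sigma(\mathbb{P}_g)=-g$ by verifying negative definiteness of the explicit tridiagonal plumbing matrix, and $\sigma(S_g)=-2g^2-4g$ by capping $S_g$ off inside the closed hyperelliptic fibration of Example~\ref{Ex13} and combining the Matsumoto--Endo formula (Theorem~\ref{sign}) with Novikov additivity against the cap $F\cup\mathcal{S}_1\cup\mathcal{S}_2\cup\mathcal{S}_3$, whose intersection form you correctly diagonalize to get signature $-2$. These individual values cross-check against the paper's own $g=1$ identification of $S_1$ with the complement of a $+3$-torus in $\CP\#6\CPb$, which gives $e(S_1)=9$ and $\sigma(S_1)=-6$ exactly as your formulas predict. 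For the Euler characteristic the two proofs are the same in spirit (the paper drops the $\chi(\Sigma_g^3)$ contribution from both pieces, which again cancels in the difference), and the passage to $c_1^2$ and $\chi$ via $2e+3\sigma$ and $(e+\sigma)/4$ is identical. What your approach buys is that the intermediate quantities $e(S_g)$, $e(\mathbb{P}_g)$, $\sigma(S_g)$, $\sigma(\mathbb{P}_g)$ are individually correct and reusable elsewhere, not merely their differences.
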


\begin{proof} Using the surgery description of $X_g$ and the Euler characteristics of $S_g$ and $\mathbb{P}_g$, we compute $e(X_g) = e(X) - e(S_g)+ e(\mathbb{P}_g) =  e(X) - (2g+2)(2g+1) - (g+2) = 4g^2 +5g$. Since both $4$-manifolds $S_g$ and $\mathbb{P}_g$ have negative definite intersection forms of rank $(g+1)(2g+1)$ and $(g+2)$ respectively, we compute $\sigma(X_{g}) = \sigma(X) + (g+1)(2g+2) - (g+2) = \sigma(X) + 2g^2 + 3g$. Consequently, we have $c_{1}^{2}(X_{g}):=2e(X_{g}) + 3\sigma(X_{g}) =  {c_1}^2(X) - g(2g+1)$ and $\chi(X_{g}):= (e(X_{g} + \sigma(X_{g})/4 = \chi(X)- g(g+1)/2$. 

\end{proof}

\begin{thm} The GSR surgery and the reverse GSR surgery operations are both symplectic operations.
\end{thm}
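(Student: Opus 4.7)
The plan is to exhibit both $S_g$ and $\mathbb{P}_g$ as strong symplectic fillings of one and the same contact $3$-manifold, and then invoke the standard cut-and-paste principle for matching convex--concave boundaries. By construction, $S_g$ is a Stein domain with convex contact boundary $(\partial S_g, \xi_S)$, and the associated open book decomposition on $\partial S_g$ has page $\Sigma_g^3$ and monodromy $(t_{a_0}t_{a_1}\cdots t_{a_{2g+1}})^{2g+1}$, since $S_g$ is built directly from this positive factorization.

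To put a compatible symplectic structure on $\mathbb{P}_g$, I will apply Theorem~\ref{convex} to the natural configuration of symplectic surfaces realizing it: one genus $g$ surface of self-intersection $-3$ linearly plumbed with $(g-1)$ spheres of self-intersection $-2$, with all intersections made $\omega$-orthogonal. A short inductive determinant calculation shows that the intersection form of this plumbing graph is negative definite, so Theorem~\ref{convex} produces a symplectic form on $\mathbb{P}_g$ with convex boundary and with induced contact structure $\xi_P$ supported by the open book on $\Sigma_g^3$ with monodromy $t_{c_1} t_{c_2}^{g} t_{c_3}$. The generalized star relation in $\Gamma_g^3$ asserts precisely that these two monodromies coincide, so the open books on $\partial S_g$ and on $\partial \mathbb{P}_g$ are isomorphic as abstract open books, and the Giroux correspondence then gives a contactomorphism $\Phi: (\partial S_g, \xi_S)\to (\partial \mathbb{P}_g, \xi_P)$.

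To execute the forward GSR surgery, I start with a symplectic embedding $S_g \hookrightarrow (X, \omega)$. Since $S_g$ has convex boundary in $X$, the complement $X \setminus \mathrm{int}(S_g)$ is a strong concave filling of $(\partial S_g, \xi_S)$; using $\Phi$ to identify contact boundaries, I glue in $(\mathbb{P}_g, \omega_P)$ along its convex boundary, and standard symplectic gluing for matching convex--concave boundaries (in the form used by Symington, Gay--Stipsicz and Etnyre) produces a closed symplectic $4$-manifold $X_g = (X \setminus S_g)\cup_\Phi \mathbb{P}_g$. The reverse GSR surgery is entirely analogous, with the roles of $S_g$ and $\mathbb{P}_g$ interchanged: given a symplectic embedding $\mathbb{P}_g \hookrightarrow (Y,\omega')$ with convex boundary, the complement is concave and $S_g$ is glued in along $\Phi^{-1}$. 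The main obstacle in this plan is the identification of the two contact structures on the common boundary; this reduces via Giroux's theorem to the equality of open-book monodromies, which is exactly the content of the GSR relation. A secondary technical point is the verification of the negative-definiteness hypothesis in Theorem~\ref{convex}, but this is routine once the plumbing graph is written down explicitly.
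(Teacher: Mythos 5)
Your argument is correct in outline, but it is a genuinely different route from the one the paper actually takes in its proof. The paper argues by explicit geometric constructions, split into two cases: for $g=1$ it identifies the reverse star surgery with the symplectic connected sum of $Y$ with $\CP\#6\CPb$ along a square $-3$ torus and the square $+3$ torus in the class $3H-e_1-\cdots-e_6$ (whose complement is $S_1$), and identifies the forward surgery with a degeneration to an ordinary triple point followed by blow-up, with the $-3$ torus as exceptional divisor; for $g\geq 2$ it replaces the ordinary symplectic sum by a sequence of Symington $3$-fold sums (Theorem~\ref{3-fold sum}) involving $Y$, the surface $W(g)$ with its genus $g$ pencil and $-g$ sphere section, the Hirzebruch surfaces $\mathbb{F}_{g-1},\dots,\mathbb{F}_2$, and $\CP$, and again interprets the forward surgery as resolution of a higher triple-point singularity. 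Your convex--concave gluing argument via matching Stein/strong fillings of a common contact boundary is precisely the alternative proof the paper itself sketches in the Remark immediately following the theorem, so it is a legitimate and in fact more uniform approach: it avoids the case split, treats both directions symmetrically, and reduces everything to the Giroux correspondence plus the GSR relation. What it loses is the geometric payoff of the paper's proof, which exhibits the surgery as a recognizable operation (symplectic sum, resolution of a triple point) and thereby feeds directly into the invariant computations and the algebro-geometric interpretation used later. One step in your write-up deserves more care: Theorem~\ref{convex} only supplies an $\omega$-convex neighborhood of the negative definite configuration; it does not by itself identify the induced contact structure on $\partial \mathbb{P}_g$ as the one supported by the open book with page $\Sigma_g^3$ and monodromy $t_{c_1}t_{c_2}^{g}t_{c_3}$. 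That identification requires a separate argument (e.g.\ the open book constructions for plumbings due to Gay or Etnyre--Ozbagci); the paper asserts the agreement of the two boundary open books without proof, so your proposal inherits rather than resolves this point.
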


\begin{proof} We will consider the cases when $g=1$, and $g \geq 2$ separately. The treatment of the case $g=1$ provides a good intuition for the general case of $g \geq 2$.

\emph{Case 1:} The reverse star surgery along the plumbing $\mathbb{P}_1$ on symplectic $4$-manifold $Y$ corresponds to the symplectic connected sum of $Y$ with $\CP\#6\CPb$ along a torus of square $-3$ in $X$ and a symplectic torus of square $+3$ representing the class $3H - e_1- \cdots - e_6$ in $\CP\#6\CPb$. The complement of torus $3H - e_1 - \cdots - e_6$ in $\CP\#6\CPb$ is the Stein filling $S_{1}$. Note that the Stein filling $S_{1}$ can be also be obtained from elliptic fibration on $E(1)$ with three $-1$ sphere sections by removing a tubular neighborhood of a regular torus fiber and these three $-1$ sphere sections. In fact the word $(t_{a}t_{a}t_{b}t_{a})^3 = t_{c_1}t_{c_2}t_{c_3}$ corresponding the star relation in $\Gamma_{1}^{3}$ defines such an elliptic fibration (See Example~\ref{Ex13} for the details). Our second piece, which correponds to the expression $t_{c_1}t_{c_2}t_{c_3}$, is the disk bundle over torus with the Euler number $-3$ in $Y$ is the plumbing $\mathbb{P}_1$. 

On the other hand, the star surgery along $S_1$ is the resolution of the symplectic singularitiy determined by $S_1$. Let us spell out more clearly what we mean by this. The singularity determined by $S_{1}$ is of type $\tilde{E_{6}}$, which is also commonly known as $T_{3,3,3}$ singularity or an ordinary triple point singularity
in various literature. The star surgery on symplectic $4$-manifold $X$ along $S_{1}$ corresponds to a deformation of $S_{1}$ in $X$ and then followed by blow up at the resulting singularity, which is the oridinary triple point $P$ determined by colapsing the vanishing cycles corresponding to $\mathbb{S}_{1}$. Let $T_{P}$ be the exceptional divisor arising from the blow up at point $P$, which is a symplectic torus self intersection $-3$. Thus, we obtain the symplectic $4$-manifold $X_1$ with the symplectic plumbing $\mathbb{P}_1$. Recall the analogy with the rational blowdown operation along $C_2$ plumbing (i.e $- 4$ sphere), which correponds the fiber sum with $\CP$ along a quadric $2H$.

\emph{Case 2}. In the case $g \geq 2$, the reverse GSR surgery being symplectic can be proved using the $3$-fold sum operation of Symington, an adaptation of the symplectic sum operation for positively intersecting surfaces (see Theorem~\ref{3-fold sum} above). For details on the topology of $3$-fold sum and it's application to show the rational blow-downs operation is symplectic, the reader is refered to Symington~\cite{Sym1, Sym2, Sym3}. Consider the plumbing $P_g$ on symplectic $4$-manifold $Y$ for $g \geq 2$. Our sequence of 3-fold symplectic sums will use $Y$ with the plumbing $P_g$, the complex surface $W(g)$ with the genus $g$ symplectic surface of self-intersection $2$, and the sphere with self-intersection $-g$ (see Example~\ref{Ex13}, where the pencil with two base points on $W(g)$ and a sphere $\mathcal{S}_{2}$ with self-intersection $g$ is provided), the Hirzebruch surfaces $\mathbb{F}_{g-1}$, $\cdots$, $\mathbb{F}_{2}$ with the  positive and negative sections $\Delta_{\pm}$ (for $(g-1) \geq i \geq 2$) and the sphere fibers all between them, and $\CP$ with two lines of square $1$. The result of the sequence of $3$-fold sums yields to the symplectic $4$-manifold $Y_{g}$. The GSR surgery on symplectic $4$-manifold $X$ along $S_{g}$ corresponds to collapsing all the vanishing cycles of $S_{g}$ in $X$ and then followed by resolution of the resulting singularity, which is more general $3$-tuple point singularity $P$. The resolution of $P$ introduces the exceptional curves $T_{P}$, $R_{1}$, $\cdots$, $R_{g-1}$, where $T_{P}$ is $-3$ genus $g$ surface, and $R_{1}$, $\cdots$, $R_{g-1}$ are $-2$ spheres resulting from the symplectic resolution. 
\end{proof}

\begin{rmk} The above Theorem can also proved using Theorem~\ref{convex} and the fact that $S_{g}$ is Stein filling of the boundary $3$-manifold. The above operations replace the strong symplectic filling $\mathbb{P}_g$ with a Stein filling $S_{g}$, and vice versa. The (generalized) chain relations also correspond to the symplectic surgery operations, which can be defined along the singularities given by the chain relation and it's generalizations. The surgery operation can be analogously defined as in GSR surgery case using the plumbed $4$-manifolds corresponding to the (generalized) chain relation $(t_{a_1}t_{a_2} \cdots t_{a_m})^{m} = t_{d_1}t_{d_2}$ and $(t_{a_1} \cdots t_{a_m})^{2m+2} = t_{c}$, and their generalizaton. In fact, these surgeries correspond to the Stein fillings $V_{m}$ and $W_{m}$  determined by the left hand sides of these formulas, replaced by the strong symplectic fillings determined by the right had side. Note that the right hand sides correspond to the disk bundles over genus $g$ surfaces with Euler numbers $-1$ and $-2$ respectively. Several examples via the chain relation will be constructed in Section~\ref{words}, which explains the corresponding surgeries. The new symplectic surgery operations based on these ideas and interesting families of Lefschetz fibrations with exotic total spaces will appear in an upcoming work of Akhmedov and Karakurt~\cite{AKa}.
\end{rmk}

\section{Lefschetz fibrations via GSR Surgery and other relations in the mapping class group}\label{words}

In what follows, we will construct many examples of  over $\mathbb{S}^2$ by applying GSR surgery. We also include several Lefschetz fibrations over $\mathbb{S}^2$ obtained via the chain relations in the mapping class group. This is a natural continuation of work begun by the first author in \cite{AP, AMon1, AMon2}, where the lantern and the generalized lantern relations were used. 

In the first few examples, we will use genus one and two Lefschetz fibrations and apply the degenerate cases of star relation. Subsequently, we provide examples using the Lefschetz fibrations of genus $g \geq 3$. Below, we also provide several genus two, genus three and higher genus Lefschetz fibrations via the chain relations. 

\subsection{The star relations on genus one Lefschetz fibrations}

\begin{exmp}\label{ex1} It is well-known that the mapping class group $\Gamma_1$ of the torus is $SL(2, \mathbb{Z})$. It is 
generated by the Dehn twists $t_{a}$ and $t_{b}$ along the standard curves $a$ and $b$ that generate the first homology of the torus and subject to the relations \[ t_{a}t_{b}t_{a}=t_{b}t_{a}t_{b}\qquad {\mbox{ and }} \qquad (t_{a}t_{b})^6=1 \]

Classification of genus $1$ Lefschetz fibrations were given by Moishezon \cite{M}. He showed that after a possible perturbation such a fibration over $\mathbb{S}^2$ is equivalent to one of the fibrations given by the words $(t_{a}t_{b})^{6n}=1$ in $\Gamma_1$. The total space of the Lefschetz fibration corresponding to $(t_{a}t_{b})^{6n}=1$ is $E(n)$, where $E(n)$ is a simply connected elliptic surface without multiple fibers.

Let us consider an elliptic Lefschetz fibration on $E(n)$ with monodromy $(t_{a}t_{b})^{6n}=1$. We can view star relation on torus as ($t_{a}t_{a}t_{b}t_{a})^3 = t_{c_1}t_{c_2}t_{c_3}$, where all $3$ boundary curves $c_1$, $c_2$ and $c_3$ bound disks on torus, thus we are using a degenerate case of star relation. Using the braid relation $t_{a}t_{b}t_{a}=t_{b}t_{a}t_{b}$, let us write the monodromy of $E(n)$ as $1= (t_{a}t_{b})^{6(n-1)}(t_{a}t_{b})^6 = (t_{a}t_{b})^{6(n-1)} (t_{a}t_{a}t_{b}t_{a})^3$ and apply the star relations to the subword $(t_{a}t_{a}t_{b}t_{a})^3$. Notice that the resulting complex surface admits fibration with $6(n-1)$ nodal fibers and one additional singular fiber which consist of $-3$ torus and three disjoint $-1$ spheres intersecting $-3$ torus at three distinct points.  The last singular fiber arises from pinching the three boundary curves $c_1$, $c_2$, and $c_3$, after the GSR surgery using the word  $(t_{a}t_{a}t_{b}t_{a})^3$. It is easy to see that the resulting complex surface has the invariants $c_{1}^2 = -3$ and $\chi_{h} = n-1$, and it is simply connected if $n > 1$ and has the fundamental group isomorphic to $\mathbb{Z} \times \mathbb{Z}$ if $n=1$. By blowing down three $-1$ spheres and using Moishezon's classification of genus $1$ Lefschetz fibrations, we verify that one star relation and blowdown of three $-1$ spheres yields to $E(n-1)$ for $n > 1$ and $\mathbb{T}^2 \times \mathbb{S}^2$ if $n=1$. In fact, the singularity given by $(t_{a}t_{a}t_{b}t_{a})^3$ corresponds to the elliptic triple point singularity of the form $x^{3}+y^{3}+z^{3}=0$ and $-3$ torus is an exceptional torus arising from the resolution. Thus, the star relation and it's corresponding surgery are very useful tools for studying and classifying elliptic Lefschetz fibrations. Notice that the above operation can be reversed: take a regular fiber $F$ of $E(n-1)$ for $n > 1$ or  of $\mathbb{T}^2 \times \mathbb{S}^2$, blow up $F$ at three distinct points, then sum along strict transform of $F$  with $\CP\#6\CPb$ along a torus of square $+3$ given by the class $3H - e_1- \cdots - e_6$ to get $E(n)$. 
\end{exmp}

\subsection{The star and chain relations on genus two Lefschetz fibrations}

The next set of genus two examples illustrates how to apply the degenerate cases of star relation to genus two fibration. These examples also demonstrate the importance of these operations to study the genus two Lefschetz fibrations over $\mathbb{S}^2$. We also include several genus two Lefschetz fibrations over $\mathbb{S}^2$ obtained via the chain relations.

\begin{exmp}\label{ex2} We start with the genus two Lefschetz fibration with the monodromy $(t_{b_1}^{2}t_{b_{2}}t_{b_{3}}t_{b_{4}}t_{b_{5}})^5 = 1$ in the mapping class group $\Gamma_1$. As we remarked before, the total space of this fibration is $Y(2) = K3\#2\CPb$. We will make use of the degenerate case of star relation $(t_{a_0}t_{a_1}t_{a_2} t_{a_{3}})^{3} = t_{c_1} t_{c_2}t_{c_3}$, by assuming that the parallel curves $a_0$ and $a_1$ represent the class of $b_1$ curve, the boundary curves $c_1$ and $c_3$ both represents the class of $b_5$ curve, and $c_2$ curve bounds a disk on the closed genus two surface. Under these assumptions, the star relations $(t_{a_0}t_{a_1}t_{a_2} t_{a_{3}})^{3} = t_{c_1} t_{c_2}t_{c_3}$ on the genus two surface becomes $(t_{b_1}t_{b_1}t_{b_2} t_{b_{3}})^{3} = t_{b_5}t_{b_5}$. 

We apply the conjugations, the braid and the star relations $(t_{b_1}t_{b_1}t_{b_2}t_{b_{3}})^{3} = t_{b_5}t_{b_5}$ to the monodromy $(t_{b_{1}}^{2}t_{b_{2}}t_{b_{3}}t_{b_{4}}t_{b_{5}})^5 = 1$ to construct the genus two Lefschetz fibrations over $\mathbb{S}^2$. By applying the repeated conjugations of the subword $t_{b_{4}}t_{b_{5}}$ in the given monodromy by the subword $t_{b_1}^{2}t_{b_2} t_{b_{3}}$, i.e. using the identity ${t_{b_1}}^{2}t_{b_{2}}t_{b_{3}}t_{b_{4}}t_{b_{5}}{t_{b_1}}^{2}t_{b_{2}}t_{b_{3}} = t_{b_1}^{2} t_{b_2} t_{b_3}(t_{b_4}t_{b_5})(t_{b_1}^{2} t_{b_2} t_{b_3})^{-1}(t_{b_1}^{2} t_{b_2} t_{b_3})^{2}$, we will collect the expressions $(t_{b_1}t_{b_1}t_{b_2} t_{b_{3}})^{5}$ to the end of our monodromy and obtain the following expression for the word $(t_{b_{1}}^{2}t_{b_{2}}t_{b_{3}}t_{b_{4}}t_{b_{5}})^5 = 1$
\begin{align*}
t_{b_1}^{2} t_{b_2} t_{b_3}(t_{b_4}t_{b_5})(t_{b_1}^{2} t_{b_2} t_{b_3})^{-1}(t_{b_1}^{2} t_{b_2} t_{b_3})^{2}(t_{b_4}
t_{b_5})(t_{b_1}^{2} t_{b_2} t_{b_3})^{-2}(t_{b_1}^{2} t_{b_2} t_{b_3})^{3}(t_{b_4}t_{b_5})\\(t_{b_1}^{2} t_{b_2}  t_{b_3})^{-3}(t_{b_1}^{2} t_{b_2} t_{b_3})^{4}(t_{b_4}t_{b_5})(t_{b_1}^{2} t_{b_2} t_{b_3})^{-4}(t_{b_1}^{2} t_{b_2} t_{b_3})^{5}t_{b_4}t_{b_5} = 1
\end{align*}

 We next apply star substitution to the sub-expression  $(t_{b_1}^{2} t_{b_2} t_{b_3})^{3}$ inside of expression $(t_{b_1}^{2} t_{b_2} t_{b_3})^{5}$. From the resulting monodromy, it is easy to see that one obtains genus two  fibration over $\mathbb{S}^2$ with $20$ nodal non-separating singular fibers and one additional fiber which contains $-1$ sphere. The last singular fiber arises from of the boundary curve $c_2$, which according to our set up bounds a disk. By blowing this $-1$ sphere, we obtain Lefschetz fibrations over $\mathbb{S}^2$ with simply connected total space, $20$ nodal non-separating singular fibers, and with the invariants $c_{1}^2 = -4$, $e = 16$, $\sigma = - 12$. The total space of the resulting genus two fibration is diffeomorphic to the rational surface $\CP\#13\CPb$. After blowdown of $-1$ sphere, the resulting singular fiber corresponding to $t_{b_5}t_{b_5}$ has two components: one torus with self-intersection $-2$ and other sphere with self-intersection $-2$, which corresponds to the singularity of type $I_{2,0,0}$ in \cite{NamU}.

It would be interesting to determine if the resulting Lefschetz fibration is holomorphic or not. Observe that the symplectic Kodaira dimension reduces under the above operation, which is the consequence of Usher's and Li-Yau's Theorems in \cite{U, LY} on how the symplectic Kodaira dimension changes under symplectic connected sums and also the fact that the Kodaira dimension is preserved under blowdown of $-1$ spheres.
\end{exmp}

\begin{exmp}\label{ex3} We start with the genus two Lefschetz fibration with the monodromy $(t_{b_1}t_{b_{2}}t_{b_{3}}t_{b_{4}}t_{b_{5}})^6 = 1$ in the mapping class group of genus two surface. The total space of this fibration is also $Y(2) = K3\#2\CPb$. In \cite{AP}, the lantern relations was studied on the given monodromy and consequently an exotic symplectic $4$-manifolds were constructed. 

We will make use of the chain relation $(t_{a_0}t_{a_1})^{6} = t_{c}$ twice on the given monodromy, by assuming that the curves $a_0$ and $a_1$ represents $b_1$ and $b_2$ curves and then by $b_4$ and $b_5$ curves, and the boundary curve $c$ on torus is a seperating curve $c$ on the genus two surface. Under these assumptions, the chain relations becomes $(t_{b_1}t_{b_2})^{6} = t_{c} = (t_{b_4}t_{b_5})^{6}$.

We will apply the conjugations, the braid relations, and the chain relations $(t_{b_1}t_{b_2})^{6} = t_{c} = (t_{b_4}t_{b_5})^{6}$ to the the monodromy $(t_{b_1}t_{b_2}t_{b_3}t_{b_4}t_{b_5})^{6} = 1$ to construct the genus two Lefschetz fibrations over $\mathbb{S}^2$. By applying the braid relations~\ref{com&braid.lem} repeatedly and then followed by the conjugation, we have 
\begin{align*}
1=((t_{b_1}t_{b_2}t_{b_3}t_{b_4}t_{b_5})^{2})^{3} = (t_{b_1}t_{b_2}t_{b_1}t_{b_3}t_{b_2}t_{b_4}t_{b_3}t_{b_5}t_{b_4}t_{b_5})^{3}= \\ ((t_{b_1}t_{b_2})^{2})(t_{b_2}^{-1}t_{b_3}t_{b_2})(t_{b_4}t_{b_3}t_{b_4}^{-1})(t_{b_4}t_{b_5})^{2})^{3}. 
\end{align*}

Next, using the conjugations of the above word by powers of $(t_{b_1}t_{b_2})^{2}$ and $(t_{b_5}t_{b_6})^{2}$, we collect the expressions $(t_{b_1}t_{b_2})^{6}(t_{b_4}t_{b_5})^{6}$ to the front of the word. We have 
\begin{align*}
1 = (t_{b_1}t_{b_2}t_{b_4}t_{b_5})^{6}(t_{b_1}t_{b_2}t_{b_4}t_{b_5})^{-4}(t_{b_2}^{-1}t_{b_3}t_{b_2})_{cj}(t_{b_4}t_{b_3}t_{b_4}^{-1})_{cj}(t_{b_1}t_{b_2}t_{b_4}t_{b_5})^{4}\\(t_{b_1}t_{b_2}t_{b_4}t_{b_5})^{-2}
(t_{b_2}^{-1}t_{b_3}t_{b_2})_{cj}(t_{b_4}t_{b_3}t_{b_4}^{-1})_{cj}(t_{b_1}t_{b_2}t_{b_4}t_{b_5})^{2}(t_{b_2}^{-1}t_{b_3}t_{b_2})_{cj}(t_{b_4}t_{b_3}t_{b_4}^{-1})_{cj}
\end{align*}

Notice that we can apply two chain substitution to the sub-expression $(t_{b_1}t_{b_2})^{6}$ and $(t_{b_4}t_{b_5})^{6}$ inside of expression $(t_{b_1}t_{b_2})^{6}(t_{b_4}t_{b_5})^{6}$. From the resulting monodromy, it is easy to see that after applying first chain relation (say using $(t_{b_1}t_{b_2})^{6}= t_{c}$), we obtain genus two Lefschetz fibration over $\mathbb{S}^2$ with $18$ non-separating singular fibers and one separating singular fiber $c$. The last singular fiber arises from the separating curve $c$. The global monodromy of both Lefschetz fibrations can be written down explicitly from the word above. The total space of this Lefschetz fibration is homemorphic to $\CP\#12\CPb$ \cite{freedman}. Using the symplectic Kodaira dimension, it can be shown that the total space is not diffeomorphic to $\CP\#12\CPb$. After applying the second chain relation $(t_{b_4}t_{b_5})^{6}= t_{c'}$, we obtains genus two Lefschetz fibration over $\mathbb{S}^2$ with $6$ non-separating singular fibers and $2$ separating singular fibers along the parallel curves $c$ and $c'$. The total space of the resulting genus two fibration is diffeomorphic to the ruled surface $\mathbb{T}^2 \times \mathbb{S}^2\#4\CPb$ and the fibration on it is Matsumoto's famous genus two Lefschetz fibration of type $(6,2)$. Notice that the symplectic Kodaira dimension did not increase under the above operation.
\end{exmp}

\begin{exmp}\label{ex4} Let us start with the genus two Lefschetz fibration with the monodromy $(t_{b_1}t_{b_{2}}t_{b_{3}}t_{b_{4}})^{10} = 1$ in the mapping class group of genus two surface. As we remarked before, the total space of this fibration is $Z(2)$ = \emph{Horikawa surface}. We will make use of the degenerate case of star relation $(t_{b_1}t_{b_1}t_{b_2} t_{b_{3}})^{3} = t_{b_5}t_{b_5}$ as in Example~\ref{ex2} above.

We will apply the conjugations, braid relations and the star relation $(t_{b_1}t_{b_1}t_{b_2}t_{b_{3}})^{3} = t_{b_5}t_{b_5}$ to the the monodromy $(t_{b_{1}}t_{b_{2}}t_{b_{3}}t_{b_{4}})^{10} = 1$. 
Using the braid relations repeately, expression $(t_{b_{1}}t_{b_{2}}t_{b_{3}}t_{b_{4}})^{2}$ can be written as follows: $(t_{b_{1}}^{2}t_{b_{2}}t_{b_{3}})t_{b_{1}}t_{b_{2}}t_{b_{4}}t_{b_{3}}$ and then by conjugations with $(t_{b_{1}}^{2}t_{b_{2}}t_{b_{3}})$, we collect the expressions $(t_{b_1}t_{b_1}t_{b_2} t_{b_{3}})^{5}$ to the end of our monodromy and obtain the following expression for the word $(t_{b_{1}}t_{b_{2}}t_{b_{3}}t_{b_{4}})^{10} = 1$
\begin{align*}t_{b_1}^{2} t_{b_2} t_{b_3}(t_{b_1}t_{b_2}t_{b_4}t_{b_3})(t_{b_1}^{2} t_{b_2} t_{b_3})^{-1}\\(t_{b_1}^{2} t_{b_2} t_{b_3})^{2}(t_{b_1}t_{b_2}t_{b_4}t_{b_3})(t_{b_1}^{2} t_{b_2} t_{b_3})^{-2}(t_{b_1}^{2} t_{b_2} t_{b_3})^{3}\\
(t_{b_1}t_{b_2}t_{b_4}t_{b_3})(t_{b_1}^{2} t_{b_2}t_{b_3})^{-3}(t_{b_1}^{2} t_{b_2} t_{b_3})^{4}(t_{b_1}t_{b_2}t_{b_4}t_{b_3})\\(t_{b_1}^{2} t_{b_2} t_{b_3})^{-4}(t_{b_1}^{2} t_{b_2} t_{b_3})^{5}t_{b_1}t_{b_2}t_{b_4}t_{b_3} = 1
\end{align*}

Note that we can apply star substitution to the expression $(t_{b_1}^{2} t_{b_2} t_{b_3})^{3}$ inside of expression $(t_{b_1}^{2} t_{b_2} t_{b_3})^{5}$. From the resulting monodromy, it is easy to see that one obtains genus two  fibration over $\mathbb{S}^2$ with $30$ nodal non-separating singular fibers and one additional fiber which contains $-1$ sphere. The last singular fiber arises from of the boundary curve $c_2$. By blowing this $-1$ sphere, we obtain Lefschetz fibrations over $\mathbb{S}^2$ with simply connected total space, $30$ nodal non-separating singular fibers, and with the invariants $c_{1}^2 = -2$, $e = 26$, $\sigma = - 18$.  The total space of the resulting genus two fibration is $Y(2) = K3\#2\CPb$. Again, observe that the symplectic Kodaira dimension reduces under the above operation, which is the consequence of Usher's and Li-Yau's Theorems in \cite{U, LY}. 
\end{exmp}

\begin{exmp}\label{ex5'} We again start with the genus two Lefschetz fibration with the monodromy $(t_{b_1}t_{b_2}t_{b_3}t_{b_4})^{10} = 1$. We will make use of the chain relation $(t_{a_0}t_{a_1})^{6} = t_{c}$, by assuming that the curves $a_0$ and $a_1$ represents $b_1$ and $b_2$ curves and the boundary curve $c$ on torus is a seperating curve $c$ on the genus two surface. Under these assumptions, the chain relations becomes $(t_{b_1}t_{b_2})^{6} = t_{c}$.

We apply the conjugations, the braid relation and the chain relations $(t_{b_1}t_{b_2})^{6} = t_{c}$ to the monodromy $(t_{b_1}t_{b_2}t_{b_3}t_{b_4})^{10} = 1$ to construct the genus two Lefschetz fibrations. First, by applying the braid relations repeatedly, we have 
\begin{align*}
1= ((t_{b_1}t_{b_2}t_{b_3}t_{b_4})^{2})^{5} = (t_{b_1}t_{b_2}t_{b_1}t_{b_2}t_{b_3}t_{b_2}t_{b_4}t_{b_3})^{5}= ((t_{b_1}t_{b_2})^{2}t_{b_3}t_{b_2}t_{b_4}t_{b_3})^{5}
\end{align*}

Next, using the conjugations of the above word by powers of $(t_{b_1}t_{b_2})^{2}$, we will collect the expressions $(t_{b_1}t_{b_2})^{10}$ to the front of the word. Notice that we apply the chain substitution to 
the sub-expression $(t_{b_1}t_{b_2})^{6}$ inside of expression $(t_{b_1}t_{b_2})^{10}$. From the resulting monodromy, it is easy to see that after applying one chain relation using $(t_{b_1}t_{b_2})^{6}= t_{c}$), we obtain genus two Lefschetz fibration over $\mathbb{S}^2$ with $28$ non-separating singular fibers and one separating singular fiber. The last singular fiber arises from the separating curve $c$. The total space of this Lefschetz fibration is an exotic copy of to $3\CP\#20\CPb$. 

Next, we show how to find two chain relations $(t_{b_1}t_{b_2})^{6} = t_{c}$. The computation is more subtle is this case, and we make use of Lemma~\ref{power}. By setting $m=4$, $k=2$ and $g=2$ in Lemma~\ref{power}, we get the following relation:

\begin{align}
(t_{b_1}t_{b_2}t_{b_3}t_{b_4})^{3} = (t_{b_1}t_{b_2})^{3}(t_{b_3}t_{b_2}t_{b_1})(t_{b_4}t_{b_3}t_{b_2})
\end{align}
which can be conjugated to have the following form

\begin{align*}
((t_{b_1}t_{b_2})^{3}(t_{b_3}t_{b_2}t_{b_1})(t_{b_4}t_{b_3}t_{b_2}))= ((t_{b_1}t_{b_2})^{4}(t_{b_2}t_{b_1}t_{b_2})\\(t_{b_2}t_{b_1}t_{b_2})^{-1}(t_{b_3})t_{b_2}t_{b_1}t_{b_2}{t_{b_2}}^{-1}(t_{b_4}t_{b_3})t_{b_2})
\end{align*}

\noindent Using the global monodromy and conjugating by powers of $((t_{b_1}t_{b_2})^{4}(t_{b_2}t_{b_1}t_{b_2})$, we can collect $(t_{b_1}t_{b_2})^{9}(t_{b_2}t_{b_1}t_{b_2})^{3}$ from the word $(t_{b_1}t_{b_2}t_{b_3}t_{b_4})^{10}= 1$. The last expression is braid equvalent to of $(t_{b_1}t_{b_2})^{12}$. Thus, we can apply at least to chain relations $(t_{b_1}t_{b_2})^{6} = t_{c}$ to $(t_{b_1}t_{b_2}t_{b_3}t_{b_4})^{10} = 1$. We obtain genus two Lefschetz fibration over $\mathbb{S}^2$ with $16$ non-separating singular fibers and two separating singular fibers. The last two singular fiber arise from two chain relations. The total space of this Lefschetz fibration is homeomorphic to $\CP\#11\CPb$ \cite{freedman}.  Using the symplectic Kodaira dimension, it can be shown that the total space is not diffeomorphic to $\CP\#11\CPb$. In fact, we beleive that one additional chain relation can be applied, which would yield to Xiao Gang's beautiful genus two Lefschetz fibration of type $(4,3)$ on ruled surface $\mathbb{T}^2 \times \mathbb{S}^2\#3\CPb$ \cite{Xiao}. 
\end{exmp}

\newpage

\subsection{The chain relations on genus $g\geq 3$ Lefschetz fibrations}

\begin{exmp}\label{ex5} We consider genus three Lefschetz fibration over $\mathbb{S}^2$ with the monodromy $(t_{b_1}t_{b_2} \cdots t_{b_6})^{14} = 1$ in the mapping class group $\Gamma_3$. Recall from Lemma~\ref{E3} that the total space of this fibration is Horikawa surface $Z(3)$. The key idea is to find various chain relations of the form $(t_{a_0}t_{a_1})^{6} = t_{c}$ and $(t_{a_0}t_{a_1}t_{a_2}t_{a_3})^{10} = t_{d}$. Such expressions can easily be found by conjugations or by applying Lemma~\ref{power}. 

Applying the braid relations repeatedly, we have 
\begin{align*}
1= ((t_{b_1}t_{b_2}t_{b_3}t_{b_4}t_{b_5}t_{b_6})^{2})^{7} = (t_{b_1}t_{b_2}t_{b_1}t_{b_3}t_{b_2}t_{b_4}t_{b_3}t_{b_5}t_{b_4}t_{b_6}t_{b_5}t_{b_6})^{7}=\\ ((t_{b_1}t_{b_2})^{2}{t_{b_2}^{-1}t_{b_3}t_{b_2}t_{b_4}t_{b_3}t_{b_5}t_{b_4}t_{b_6}t_{b_5}t_{b_6}})^{7}. 
\end{align*}

Using the conjugations of the above word by appropriate powers of $(t_{b_1}t_{b_2})^{2}$, we will collect the expressions $(t_{b_1}t_{b_2})^{14}$ to the front of the word. Notice that we can apply up at least two chain substitution to the sub-expression $(t_{b_1}t_{b_2})^{6}$ inside of expression $(t_{b_1} t_{b_2})^{14}$. From the resulting monodromy, it is easy to see that after applying two chain relation using $(t_{b_1}t_{b_2})^{6}= t_{c}$, we obtain genus three Lefschetz fibration over $\mathbb{S}^2$ with $60$ non-separating singular fibers and two separating singular fibers.  We can also apply the chain relation $(t_{b_1}t_{b_2}t_{b_3}t_{b_4})^{10} = t_{d}$ to the word $(t_{b_1}t_{b_2} \cdots t_{b_6})^{14} = 1$. The sub expression $(t_{b_1}t_{b_2}t_{b_3}t_{b_4})^{10}$ can be found by using Lemma~\ref{power} or simply by conjugation by powers of $(t_{b_1}t_{b_2}t_{b_3}t_{b_4})$. The resulting genus three Lefschetz fibration over $\mathbb{S}^2$ will have $44$ non-separating singular fibers and one separating singular fiber. Applying one chain relation of type $(t_{b_1}t_{b_2}t_{b_3}t_{b_4})^{10} = t_{d}$ and other chain relation of type $(t_{b_1}t_{b_2})^{6}= t_{c}$), gives genus three Lefschetz fibration over $\mathbb{S}^2$ with $32$ non-separating singular fibers and two separating singular fiber. 

Our computation method applies equally well to $(t_{c_1}t_{c_2} \cdots t_{c_{2g}})^{2(2g+1)n} = 1$ for any $g \geq 3$ and $n \geq 1$ in $\Gamma_{g}$, using the various forms of chain relations. Let us explain the case of $(t_{c_1}t_{c_2} \cdots t_{c_{2g}})^{2(2g+1)} = 1$ for any $g \geq 3$, $n=1$ and leave the details of other cases and applications for a future paper.
Applying the conjugation, we have 
\begin{align*}
(t_{b_1}t_{b_2}t_{b_3}t_{b_4} \cdots t_{b_{2g-1}}t_{b_{2g}}) = (t_{b_1}t_{b_2} \cdots t_{b_{2g-3}}t_{b_{2g-2}})
(t_{b_{2g-1}}t_{b_{2g}})=\\
(t_{b_1}t_{b_2}\cdots t_{b_{2g-2}})(t_{b_{2g-1}}t_{b_{2g}})_{conj}
\end{align*}
Consequently, we have 
\begin{align*}
1= (t_{b_1}t_{b_2}t_{b_3}t_{b_4} \cdots t_{b_{2g-1}}t_{b_{2g}})^{2(2g+1)} = 
(t_{b_1}t_{b_2}\cdots t_{b_{2g-2}})^{2(2g+1)}\\
(\widetilde{t_{1}} \widetilde{t_{2}}\cdots \widetilde{t}_{8g+4})
\end{align*}

\noindent By using the sub expression $(t_{b_1}t_{b_2}\cdots t_{b_{2g-2}})^{4g-2}$, we can apply one chain relation using the formula $(t_{b_1}t_{b_2}\cdots t_{b_{2g-2}})^{4g-2}= t_{c}$. The total space of the resulting genus $g$ Lefschetz fibrations has the type $(16g-4, 1)$.  Many other forms of chain relations can be applied as well.
\end{exmp}

\begin{exmp}\label{ex6} We consider genus three Lefschetz fibration over $\mathbb{S}^2$ with the monodromy $(t_{b_1}t_{b_2} \cdots t_{b_6}t_{b_{7}})^{8} = 1$ in the mapping class group $\Gamma_3$. Recall from Lemma~\ref{E3} that the total space of this fibration is $Y(3)$. Again, the key idea is to apply the chain relations using 
the subsurfaces which are torus or genus two surfaces and make use of the relations $(t_{a_0}t_{a_1})^{6} = t_{c}$ and $(t_{a_0}t_{a_1}t_{a_2}t_{a_3})^{10} = t_{d}$. Such expressions can easily be found by conjugations or by applying Lemma~\ref{power}. Applying the braid relations repeatedly, we have 
\begin{align*}
1= ((t_{b_1}t_{b_2}t_{b_3}t_{b_4}t_{b_5}t_{b_6}b_{7})^{2})^{4} \\
= (t_{b_1}t_{b_2}t_{b_1}t_{b_3}t_{b_2}t_{b_4}t_{b_3}t_{b_5}t_{b_4}t_{b_6}t_{b_5}t_{b7}t_{b_6}t_{7})^{4}= \\
((t_{b_1}t_{b_2})^{2}{t_{b_2}^{-1}t_{b_3}t_{b_2}t_{b_4}t_{b_3}t_{b_5}t_{b_4}t_{b_6}t_{b_5} 
{t_{b_6}}^{-1}(t_{b_6}t_{b_7})^{2}})^{4}
\end{align*}
Using the conjugations of the above word by $(t_{b_1}t_{b_2})^{2}$ and $(t_{b_6}t_{b_7})^{2}$, we will collect the expressions $(t_{b_1}t_{b_2})^{8}(t_{b_6}t_{b_7})^{8}$ to the front of the word. Notice that we can apply up at least two chain substitution to the sub-expression $(t_{b_1}t_{b_2})^{6}$  and $(t_{b_6}t_{b_7})^{6}$ inside of expression $(t_{b_1}t_{b_2})^{8}(t_{b_6}t_{b_7})^{8}$. From the resulting monodromy, it is easy to see that after applying two chain relation using $(t_{b_1}t_{b_2})^{6}= t_{c}$ and $(t_{b_6}t_{b_7})^{6}= t_{d}$, we obtain genus three Lefschetz fibration over $\mathbb{S}^2$ with $32$ non-separating singular fibers and two separating singular fiber along the curves $c$ and $d$. 

Our computation applies equally to $(t_{c_1}t_{c_2} \cdots t_{c_{2g}})^{(2g+2)n} = 1$ for any $g \geq 3$ and $n \geq 1$ and  $(t_{c_1}t_{c_2} \cdots t_{c_{2g-1}}t_{c_{2g}}t_{{c_{2g+1}}}^2t_{c_{2g}}t_{c_{2g-1}}\cdots t_{c_2}t_{c_1})^{2n} = 1$ any $g \geq 2$ and $n \geq 3$ using the various forms of chain relation. We explain the case of $(t_{c_1}t_{c_2} \cdots t_{c_{2g}})^{2g+2} = 1$ for any $g \geq 3$ and $n=1$ and leave the details of other cases and applications for a future paper.
Applying the conjugation, we have 
\begin{align*}
(t_{b_1}t_{b_2}t_{b_3}t_{b_4} \cdots t_{b_{2g+1}}t_{b_{2g+1}}) = (t_{b_1}t_{b_2}\cdots t_{b_g}) t_{b_{g+1}}
(t_{b_{g+2}} \cdots t_{b_{2g}}t_{b_{2g+1}})=  \\
(t_{b_1}t_{b_2}\cdots t_{b_g})(t_{b_{g+2}} \cdots t_{b_{2g}}t_{b_{2g+1}})(t_{b_g})_{conj}
\end{align*}
Consequently, we have 
\begin{align*}
1= (t_{b_1}t_{b_2}t_{b_3}t_{b_4} \cdots t_{b_{2g+1}}t_{b_{2g+1}})^{2g+2} = 
(t_{b_1}t_{b_2}\cdots t_{b_g})^{2(g+1)}(t_{b_{g+2}} \cdots \\ \cdots t_{b_{2g}}t_{b_{2g+1}})^{2(g+1)}
(\widetilde{t_{1}} \widetilde{t_{2}}\cdots \widetilde{t}_{2g+2})
\end{align*}

\noindent By collecting the expression $(t_{b_1}t_{b_2}\cdots t_{b_g})^{2(g+1)}(t_{b_{g+2}} \cdots t_{b_{2g}}t_{b_{2g+1}})^{2(g+1)}$, we can apply one, two, three or four chain relations using the formulas \\
$(t_{b_1}t_{b_2}\cdots t_{b_g})^{g+1}= t_{c}t_{d}$ and $(t_{b_{g+1}}t_{b_2}\cdots t_{b_{2g+1}})^{g+1}= t_{c}t_{d}$. The total space of the resulting genus $g$ fibrations, we call them $X_{g,i}$ for $i=1, 2, 3, 4$, are exotic $4$-manifolds when $i =1, 2, 3$ and the ruled surface when $i=4$. \end{exmp}

\section*{Acknowledgments} We are very grateful to D. Auroux for some helpful discussions we have had back in September 2018 regarding some of the material presented here. We also thank H. Endo and N. Monden, with whom the first author had an email exchange with at the early stages of this project. We also thank C. Karakurt for his interest in our work and many comments which led to the present improved version of the article. Both authors would like to thank Simons Foundation for supporting our projects and the Department of Mathematics at Harvard University, where part of this work was done, for its hospitality. A. Akhmedov was supported by Simons Research Fellowship and  Collaboration Grants for Mathematicians by Simons Foundation. L. Katzarkov was supported by Simons research grant, NSF DMS 150908, ERC Gemis, DMS-1265230, DMS-1201475 OISE-1242272 PASI. Simons collaborative Grant - HMS, Simons investigator grant - HMS. HSE- grant, HMS and automorphic forms. L. Katzarkov was also partially supported by Laboratory of Mirror Symmetry NRU HSE, RF Government grant, ag. No 14.641.31.0001. 

\end{document}